\newtheorem{theorem}{Theorem}[section]
\newtheorem{lemma}[theorem]{Lemma}
\newtheorem{corollary}[theorem]{Corollary}
\theoremstyle{definition}
\newtheorem{definition}[theorem]{Definition}
\newtheorem{example}[theorem]{Example}
\newtheorem{remark}[theorem]{Remark}
\newcommand{\R}{\mathbb R}
\newcommand{\N}{\mathbb N}
\newcommand{\Lcal}{\mathcal{L}}
\newcommand{\Mcal}{\mathcal{M}}
\numberwithin{equation}{section}
\begin{document}

\pagenumbering{arabic}

  \title[Algorithm for equations of Hammerstein type and applications]
 {Algorithm for equations of Hammerstein type and applications}

\author{M.O. Aibinu$^{1*}$, S. C. Thakur$^2$, S. Moyo$^3$}
\address{$^{1}$ Institute for Systems Science \& KZN E-Skill CoLab, Durban University of Technology, Durban 4000, South Africa}
\address{$^{1}$  DSI-NRF Centre of Excellence in Mathematical and Statistical Sciences (CoE-MaSS), South Africa}
\address{$^{1}$ National Institute for Theoretical and Computational Sciences (NITheCS), South Africa}
\address{$^{2}$ KZN E-Skill CoLab, Durban University of  Technology, Durban 4000, South Africa}
\address{$^{3}$Institute for Systems Science \& Office of the DVC Research, Innovation \& Engagement Milena Court, Durban University of Technology, Durban 4000, South Africa}
\email{$^*$moaibinu@yahoo.com / mathewa@dut.ac.za}

 \keywords{ Hammerstein equation; monotone type mapping; strong convergence; forced oscillations; finite amplitude of a pendulum.\\
{\rm 2010} {\it Mathematics Subject Classification}: 47H06; 47J25.}

\begin{abstract}
Equations of Hammerstein type cover large variety of areas and are of much interest to a wide audience due to the fact that they have applications in numerous areas. Suitable conditions are imposed to obtain a strong convergence result for nonlinear integral equations of Hammerstein type with monotone type mappings. A technique which does not involve the assumption of existence of a real constant whose calculation is unclear has been used in this study to obtain the strong convergence result. Moreover, our technique is applied to show the forced oscillations of finite amplitude of a pendulum as a specific example of nonlinear integral equations of Hammerstein type. Numerical example is given for the illustration of the convergence of the sequences of iteration. These are done to demonstrate to our readers that this approach can be applied to problems arising in physical systems.
\end{abstract}

\maketitle

\section{Introduction}

The concept of monotone operators which was introduced in the 1960s has been successfully applied by many researchers to the equation of Hammerstein type.
 A nonlinear integral equation of Hammerstein type on $\Omega$  (see, e.g., Hammerstein \cite{b10}) is one of the form
\begin{equation}\label{e2}
u(x) + \int_{\Omega}k(x,y)f(y, u(y))dy = h(x),
\end{equation}
where $dy$ stands for a $\sigma$-finite measure on the measure space $\Omega$,
the kernel $k$ is defined on $\Omega \times \Omega$, $f$ is a real-valued function defined on $\Omega \times \R$ and is in general nonlinear, $h$ is a given function on $\Omega$ and $u$ is the unknown function defined on $\Omega$. Let $X$ be a real Banach space, $X^*$ its dual, $F:X\rightarrow X^*$ a nonlinear mapping of $X$ into $X^*$ and $K: X^*\rightarrow X$ a nonlinear mapping of $X^*$ into $X.$  The abstract form of Eq. (\ref{e2}) is given by
\begin{equation}\label{e3}
u + KFu = 0,
\end{equation}
where  $u \in X$ (see, e.g, Aibinu and Chidume \cite{b39}, Chidume and Bello \cite{cu1},  Chidume and Yekini (\cite{r6}, Daman \cite{DTZ1}, Diop et al. \cite{{b40}}). Hammerstein equations cover a large variety of areas and are of much interest to a wide audience due to the fact that
they have applications in numerous areas. Several problems that arise in differential equations (ordinary and partial), for instance, elliptic boundary value problems whose linear parts possess Green's function can be transformed into the Hammerstein integral equations. Equations of the Hammerstein type play a crucial role in the theory of optimal control systems and in automation and network theory (see, e.g.,  Dolezale \cite{b14}). Assuming existence, approximating a solution of $Au = 0,$ where $A : X \rightarrow X^*$ is of monotone-type has been a recent subject of interest to the researchers (see, e.g, Aibinu and Mewomo \cite{th4}, \cite{b4}, Aibinu et al. \cite{Aibinu3}, Chidume et al. \cite{ChidumeAN}, Chidume and Bello \cite{cu2}, Chidume and Idu \cite{b28}).  Strong convergence theorems were established for nonlinear Hammerstein equation Eq. (\ref{e3}) under the assumption of existence of a real constant whose calculation is unclear (see e.g, Chidume and Bello \cite{cu1}, Chidume and Djitte \cite{r5}, Chidume and Idu \cite{b28}, Chidume and Ofoedu \cite{CO1}, Chidume and Zegeye \cite{b25}, Diop et al. \cite{b40}).
\par In this paper, the goal is to establish a strong convergence theorem for nonlinear Hammerstein equation Eq. (\ref{e3}), using a technique which does not require the assumption of existence of a real constant whose calculation is unclear. This study considers nonlinear integral equations of Hammerstein type with $(p, {\eta})$-strongly monotone mapping, $p>1,$ and ${\eta} \in (1, \infty).$ Let $E$ be a Banach space with dual $E^*$ and define $X:=E\times E^*.$ Let $p>1,$ ${\eta}_1, {\eta}_1 \in (1, \infty)$ and suppose $F: E\rightarrow E^*$ is a $(p, {\eta}_1)$-strongly monotone mapping and $K: E^*\rightarrow E$ is a $(p, {\eta}_2)$-strongly monotone mapping such that $D(K)=R(F)=E^*.$ An arbitrary mapping $A: X\rightarrow X^*,$ is defined in term of $F$ and $K.$ Suitable conditions are imposed to show that $A$ is a $(p, {\eta})$-strongly monotone mapping with ${\eta}=\min\left\{{\eta}_1, {\eta}_2\right\}$ and to obtain a strong convergence result. The forced oscillations of finite amplitude of a pendulum is shown as a specific example of nonlinear integral equations of Hammerstein type. Numerical examples are also given for the illustration of the convergence of the sequences of iteration.  These show the application of our results in solving the problems which occur in physical sciences.

\section{Preliminaries}
\begin{definition}
Let $E$ be a real Banach space and $S := \left\{x \in E : \|x \| = 1\right\}$. $E$ is said to have a \textit{G}$\hat{a}$\textit{teaux differentiable norm} if the limit
\begin{equation}\label{chp21}
\displaystyle \lim_{t\rightarrow 0}\frac{\|x+ty \|-\|x \|}{t}
\end{equation}
exists for each $x, y \in S.$ A Banach space $E$ is said to be \textit{smooth} if for every $x\neq 0$ in $E,$ there is a unique $x^*\in E^*$ such that $\|x^*\|=1$ and $\left\langle x, x^*\right\rangle = \|x\|,$ where $E^*$ denotes the dual of $E.$ $E$ is said to be \textit{uniformly smooth} if it is smooth and the limit (\ref{chp21}) is attained uniformly for each $x, y \in S.$ 
\end{definition}
\begin{definition}
The \textit{modulus of convexity} of a Banach space $E$, $\delta_{E}: (0, 2]\rightarrow [0, 1]$ is defined by
 $$\delta_{E}(\epsilon)=\inf \left\{1-\frac{\|x + y \|}{2}  : \|x \|=\|y \|=1, \|x - y \| > \epsilon \right\}.$$
$E$ is \textit{uniformly convex} if and only if ${\delta}_E(\epsilon) > 0$ for every $\epsilon \in(0, 2]$. A normed linear space $E$ is said to be \textit{strictly convex} if
$$\|x \|=\|y \|=1, x\neq y\Rightarrow \frac{\|x + y \|}{2}<1.$$
 It is well known that a space $E$ is uniformly smooth if and only if $E^*$ is uniformly convex. 
 \end{definition}
\begin{definition}
Let $X$ and $Y$ be Banach spaces and $A: X\rightarrow Y$ be a mapping. $A$ is \textit{uniformly continuous} if for each $\epsilon >0,$ there exists $\delta >0$ such that 
	$$\forall~ x, y \in X \ \mbox{with} \ \|x-y\|<\delta \ \mbox{we have} \ \|Ax-Ay\|<\epsilon.$$
 Let ${\psi}(t)$ be a function on the set ${\R}^+$ of nonnegative real numbers such that:
\begin{itemize}
	\item [(i)] ${\psi}$ is nondecreasing and continuous;
	\item [(ii)] ${\psi}(t)=0$ if and only if $t=0.$
\end{itemize}
$A$ is said to be uniformly continuous if it admits the modulus of continuity ${\psi}$ such that 
$$\|A(x)- A(y)\|\leq{\psi}(\|x- y\|) \ \forall~ x, y\in X.$$ 
The modulus of continuity ${\psi}$ has some useful properties which are listed below (for instance, see e.g,  Altomare et al. \cite{ftc1}, pp. 266-269, Forster \cite{ftc2}, Ipatov \cite{ftc3}):
\begin{itemize}
	\item [(a)] Modulus of continuity is subadditive: For all real numbers $t_1\geq 0, t_2\geq 0,$ we have 
	$${\psi}(t_1+t_2)\leq{\psi}(t_1)+{\psi}(t_2).$$
	\item [(b)]  Modulus of continuity is monotonically increasing: If $0\leq t_1\leq t_2$ holds for some real numbers $t_1, t_2,$ then
	$$0\leq {\psi}(t_1)\leq {\psi}(t_2).$$
	\item [(c)]  Modulus of continuity is continuous: The modulus of continuity ${\psi}:{\R}^+\rightarrow {\R}^+$ is continuous on the set positive real numbers, in particular, the limit of ${\psi}$ at $0$ from above is
	$$\displaystyle \lim_{t\rightarrow 0}{\psi}(t)=0.$$
\end{itemize}
\end{definition}
\begin{definition}
A set $B$ is said to be compact if every open cover of $B$ has a finite subcover. That is, if $\left\{U_n\right\}_{n=1}^{\infty}$  is a collection of open sets such that $B\subseteq \bigcup_{n=1}^{\infty}U_n,$ then there is a finite subcollection $\left\{U_{n_k}\right\}_{k=1}^K$ such that $B\subseteq \bigcup_{k=1}^KU_{n_k}.$
 \end{definition}
\begin{definition}
 Let $\nu: [0, \infty )\rightarrow [0, \infty )$ be a continuous, strictly increasing function such that $\nu (t)\rightarrow \infty$ as $t\rightarrow \infty$ and $\nu(0)=0$ for any $t\in [0, \infty).$ Such a function $\nu$ is called a \textit{gauge function}. A \textit{duality mapping} associated with the guage function $\nu$ is a mapping $J^E_{\nu} : E\rightarrow 2^{E^*}$ defined by
$$J^E_{\nu}(x) =\left\{ f \in E^* :\left\langle x, f \right\rangle = {\|x\|} \nu(\| x\|), \| f\|=\nu(\| x\|)\right\},$$
where $\left\langle . , .\right\rangle$ denotes the duality pairing. For $p>1,$ let $\nu(t)=t^{p-1}$ be a gauge function. $J^E: E\rightarrow 2^{E^*}$ is called a \textit{generalized duality mapping} from $E$ into $2^{E^*}$ and is given by 
$$J^E(x) =\left\{ f \in E^* :\left\langle x, f \right\rangle = {\|x\|}^p, \| f\|={\| x\|}^{p-1} \right\}.$$
$J^E$ is uniformly continuous on bounded subsets of $E.$ For $p = 2,$ the mapping $J_2^E$ is called the normalized duality mapping. In a Hilbert space, the normalized duality mapping is the identity map.
\par The following results about the generalized duality mappings are well known and have been established in a number of works (see e.g., Alber and Ryazantseva \cite{b2}, p. 36, Cioranescu \cite{b3}, p. 25-77, Xu and Roach \cite{b7}, Z$\check{a}$linescu  \cite{b37}). Let $E$ be a Banach space, then,
\begin{itemize}
\item [(i)] $E$ is smooth if and only if $J^E$ is single-valued;
\item [(ii)] If $E$ is reflexive, then $J^E$ is onto;
\item [(iii)] If $E$ has uniform G$\hat{a}$teaux differentiable norm, then $J^E$ is norm-to-weak$^*$ uniformly continuous on bounded sets;
\item [(iv)] $E$ is uniformly smooth if and only if $J^E$ is single valued and uniformly continuous on any bounded subset of $E;$
\item [(v)] If $E$ is strictly convex, then $J^E$ is one-to-one, that is, $\forall~ x, y \in E, ~x\neq y\Rightarrow J^E(x)\cap J^E(y)=\emptyset;$
\item [(vi)] If $E$ and $E^*$ are strictly convex and reflexive, then $J^{E^*}$ is the generalized duality mapping from $E^*$ to $E$ and $J^{E^*}$ is the inverse of $J^E;$
\item [(vii)] If $E$ is uniformly smooth and uniformly convex, the generalized duality mapping $J^{E^*}$ is uniformly continuous on any bounded subset of $E^*;$
\item [(viii)] If $E$ and $E^*$ are strictly convex and reflexive, for all $x\in E$ and $f\in E^*,$ the equalities $J^EJ^{E^*}f=f$ and $J^{E^*}J^Ex=x$ hold.
\end{itemize}
\end{definition}
\begin{definition}
Let $C$ be a nonempty subset of $E$ and $A$ be a mapping from $C$ into itself. Then,
\begin{itemize}
	\item [(i)] $A$ is nonexpansive provided $\|Ax-Ay\|\leq \|x-y\| \ \mbox{for all} \ x, y\in C;$
	\item [(ii)] $A$ is firmly nonexpansive type (see e.g., Kohsaka and Takahashi \cite{b32}) if $\left\langle Ax-Ay, J^EAx-J^EAy\right\rangle \leq \left\langle Ax-Ay, J^Ex-J^Ey\right\rangle $ for all $ x, y\in C$.
\end{itemize}
\end{definition}	
\begin{definition}
Let $E$ be a smooth Banach space and $A :E \rightarrow E^*,$ be a mapping with $x,y\in E.$ $A$ is said to be monotone $${\left\langle x - y, Ax - Ay \right\rangle} \geq 0.$$
  $A$ is called strongly monotone if
 $$\left\langle x-y, Ax - Ay \right\rangle \geq k{\|x-y \|}^2,$$
 where $k$ is a positive constant (Alber and Ryazantseva \cite{b2}, page 25).
Let $p>1,$ $A$ is said to be $(p, k)$-\textit{strongly monotone} if $${\left\langle  x - y, Ax - Ay \right\rangle} \geq k {\|x-y \|}^p,$$
for a constant $k > 0$ (Chidume and Djitte \cite{r5} and Chidume and Shehu \cite{r6}).
   \begin{remark}
   \end{remark}
   According to definition of Chidume and Djitte \cite{r5} and Chidume and Shehu \cite{r6}, a strongly monotone mapping is referred to as a $(2, k)$-strongly monotone mapping.
\par $A$ is called \textit{maximal monotone} if it is monotone and its graph is not properly contained in the graph of any other monotone mapping. As a result of Rockafellar \cite{b5}, it follows that $A$ is maximum monotone if it is monotone and the range of $(J^E+tA)$ is all of $E^*$ for some $t>0.$ 
Let $E$ be a reflexive smooth strictly convex space and $A$ be a mapping such that the range of $(J^E+tA)$ is all of $E^*$ for some $t>0$ and let $x\in E$ be fixed.  Then for every $t>0,$ there corresponds a unique element $x_t \in D(A)$ such that
\begin{equation}
  J^Ex= J^Ex_t+tAx_t.
\end{equation}
Therefore, the \textit{resolvent} of $A$ is defined by $J^A_tx=x_t.$ In other words, $J^A_t=(J^E+tA)^{-1}J^E$ and $A^{-1}0=F(J^A_t)$ for all $t>0,$ where $F(J^A_t)$ denotes the set of all fixed points of $J^A_t.$ The resolvent $J^A_t$ is a single-valued mapping from $E$ into $D(A)$ (Kohsaka and Takahashi \cite{kt}).
\end{definition}
\begin{definition}\label{d1}
 Alber \cite{b1} introduced the functions ${\phi}: E \times E \rightarrow \R,$ defined by 
\begin{equation}
{\phi} (x,y)= {\|x \|}^2 - 2\left\langle x, J_2^Ey \right\rangle + {\|y \|}^2, \ \mbox{for all} \ x,y \in E,
\end{equation}
where $J_2^E$ is the normalized duality mapping from $E$ to $E^*.$ Let $E$ be a smooth real Banach space and $p, q>1$ with $\frac{1}{p} + \frac{1}{q} =1.$ Aibinu and Mewomo \cite{b4} introduced the functions ${\phi}_p : E \times E \rightarrow \R,$ defined by
$${\phi}_p (x,y)=  \frac{p}{q}{\|x \|}^q - p\left\langle x, J^Ey \right\rangle + {\|y \|}^p , \ \mbox{for all} \ x,y \in E$$
and $V_p: E\times E^* \rightarrow \R, $ defined as 
$$V_p (x,x^*)= \frac{p}{q}{\|x \|}^q - p\left\langle x, x^* \right\rangle + {\|x^* \|}^p  ~~ \forall ~~ x \in E, x^*\in E^*,$$
where $J^E$ is the generalized duality mapping from $E$ to $E^*.$ 
\end{definition}
\begin{remark}
We have the following remarks follow from Definition \ref{d1} (Alber \cite{b1}, Aibinu and Mewomo \cite{b4}):
\begin{itemize}
\item[(i)] For all $x,y \in E,$
\begin{equation}\label{e11}
(\|x\|- \|y \|)^p \leq {\phi}_p (x,y) \leq (\|x\| + \|y \|)^p.
\end{equation}
\item [(ii)] It is obvious that
\begin{equation}\label{e20}
V_p (x,x^*)= {\phi}_p (x,J^{E^*}x^*)~~ \forall ~~ x \in E,\ \ x^*\in E^*.
\end{equation}
\end{itemize}
\end{remark}
\begin{definition}
 Let $E$ be a topological real vector space and $T$ a multivalued mapping from $E$ into $2^{E^*}.$ Cauchy-Schwartz's inequality is given by
\begin{equation}
 |\left\langle x, y^*\right\rangle| \leq {\left\langle x, x^*\right\rangle}^{\frac{1}{2}}{\left\langle y, y^*\right\rangle}^{\frac{1}{2}},
\end{equation}
for any $x$ and $y$ in $D(T)$ and any choice of $x^* \in Tx$ and $y^* \in Ty$ (Zarantonello \cite{chp24}). 
\end{definition}
\par In the sequel, we shall need the lemmas whose proofs have been established (See e.g, Alber \cite{b1}, Aibinu and Mewomo \cite{b4}).
\begin{lemma}\label{l20}
 Let $E$ be a strictly convex and uniformly smooth real Banach space and $p>1.$ Then
\begin{equation}\label{e21}
V_p (x, x^*) + p\left\langle J^{E^*}x^*-x, y^* \right\rangle \leq V_p(x, x^*+y^*)
\end{equation}
for all $x\in E$ and $x^*, y^* \in E^*.$
\end{lemma}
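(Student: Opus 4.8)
The plan is to reduce the inequality (\ref{e21}) to a one-variable statement in the dual space and then to derive it from the convexity of the $p$-th power of the norm. Throughout, let $q>1$ be the conjugate exponent, $\frac1p+\frac1q=1$, so that $V_p$ is as in Definition \ref{d1}, and regard $J^{E^*}$ as the generalized duality mapping on $E^*$, that is, the Gateaux gradient of $z^*\mapsto \frac1p\|z^*\|^p$; this is single-valued since $E^*$ is smooth ($E$ being strictly convex) and continuous since $E^*$ is uniformly convex ($E$ being uniformly smooth).

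First I would substitute the definition of $V_p$ into both sides of (\ref{e21}). The term $\frac{p}{q}\|x\|^q$ occurs on both sides and cancels, while the linear part of $V_p(x,x^*+y^*)$ contributes $-p\langle x,y^*\rangle$, which cancels the term $-p\langle x,y^*\rangle$ arising from $p\langle J^{E^*}x^*-x,y^*\rangle$ on the left. After these cancellations, (\ref{e21}) is seen to be equivalent to the purely dual-space inequality
\begin{equation}\label{redux}
\|x^*\|^p + p\langle J^{E^*}x^*, y^*\rangle \leq \|x^*+y^*\|^p, \qquad x^*, y^*\in E^*.
\end{equation}

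To establish (\ref{redux}) I would fix $x^*,y^*\in E^*$ and introduce the real function $\varphi(t)=\|x^*+ty^*\|^p$, $t\in\R$. As the composition of the convex map $z^*\mapsto\|z^*\|^p$ (the $p$-th power, $p>1$, of a norm) with the affine map $t\mapsto x^*+ty^*$, the function $\varphi$ is convex, and hence $\varphi(1)\geq\varphi(0)+\varphi'(0)$. Since $\varphi(1)=\|x^*+y^*\|^p$ and $\varphi(0)=\|x^*\|^p$, it remains only to compute $\varphi'(0)$. Because the Gateaux gradient of $\frac1p\|\cdot\|^p$ on $E^*$ is $J^{E^*}$, the chain rule gives $\varphi'(0)=p\langle J^{E^*}x^*,y^*\rangle$. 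Substituting these three values into $\varphi(1)\geq\varphi(0)+\varphi'(0)$ yields precisely (\ref{redux}), and retracing the cancellations of the previous paragraph gives (\ref{e21}).

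The step I expect to be the main obstacle is the identification $\varphi'(0)=p\langle J^{E^*}x^*,y^*\rangle$, since this is the only place where the hypotheses on $E$ are genuinely used: one must know that $\frac1p\|\cdot\|^p$ is Gateaux differentiable on $E^*$ with gradient equal to the generalized duality mapping, which rests on the smoothness of $E^*$ (equivalently, the strict convexity of $E$) together with the single-valuedness and continuity of $J^{E^*}$ guaranteed by the uniform convexity of $E^*$ (equivalently, the uniform smoothness of $E$). Everything else is elementary algebra together with the one-variable convexity estimate.
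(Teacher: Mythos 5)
Your proof is correct, and there is in fact nothing in the paper to compare it against: Lemma \ref{l20} is stated without proof (``we shall need the lemmas whose proofs have been established''), being imported from Alber \cite{b1} (the case $p=2$) and Aibinu--Mewomo \cite{b4}, and the argument in those sources is the same one you give --- convexity of the $p$-th power of the dual norm plus identification of its gradient with a duality map. Your algebraic reduction is right: both sides of (\ref{e21}) share the terms $\frac{p}{q}\|x\|^q - p\langle x,x^*\rangle - p\langle x,y^*\rangle$, so the lemma is equivalent to $\|x^*\|^p + p\langle J^{E^*}x^*,y^*\rangle \le \|x^*+y^*\|^p$. One simplification: the step you single out as the main obstacle (the chain-rule computation of $\varphi'(0)$, and with it the continuity of $J^{E^*}$) is not needed at all. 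By a standard fact due to Asplund, the duality map of gauge $t^{p-1}$ on $E^*$ is exactly the subdifferential of $\frac1p\|\cdot\|_{E^*}^p$, so the reduced inequality is literally the subgradient inequality for the convex function $\|\cdot\|^p$ at $x^*$, valid in an arbitrary Banach space for an arbitrary selection from the (possibly multivalued) map; the hypotheses on $E$ serve only to make $J^{E^*}$ single-valued so that the statement is well posed.

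The one point that deserves emphasis --- and you were right to flag it --- is the meaning of $J^{E^*}$. Your proof is valid precisely under the reading you adopt: $J^{E^*}$ is the duality map of $E^*$ with gauge $t^{p-1}$, i.e.\ the gradient of $\frac1p\|\cdot\|_{E^*}^p$. This is the only reading under which the lemma is true. It is, however, not the convention used elsewhere in this paper: in property (viii) of the preliminaries and throughout Theorem \ref{t5} (e.g.\ the step $J^{E^*}(J^Eu_n)=u_n$ following (\ref{am1})), $J^{E^*}$ denotes $(J^E)^{-1}$, which is the duality map of gauge $t^{q-1}$, where $\frac1p+\frac1q=1$; the two coincide only when $p=2$. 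Under the inverse reading the inequality fails for $p\neq 2$: take $E=E^*=\R$, $p=3$, $x=0$, $x^*=4$, $y^*=-\epsilon$ with $\epsilon>0$ small; then $(J^E)^{-1}(4)=2$, the left side of (\ref{e21}) equals $64-6\epsilon$, while the right side equals $(4-\epsilon)^3 = 64-48\epsilon+12\epsilon^2-\epsilon^3$, which is strictly smaller. So your disambiguation is the mathematically correct one, but the lemma so interpreted cannot be applied verbatim with the $J^{E^*}$ of Theorem \ref{t5} unless $p=2$ --- a genuine inconsistency in the paper rather than a gap in your argument.
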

\begin{lemma}\label{l21}
Let $E$ be a smooth uniformly convex real Banach space and $p > 1$ be an arbitrarily real number. For $d > 0$, let $ B_d(0):= \left\{ x \in E: \| x \| \leq d \right\} $. Then for arbitrary $x, y \in B_d(0)$,
$$ {\|x-y \|}^p \geq{\phi}_p (x,y)- \frac{p}{q}{\| x\|}^q, \ \mbox{where} \ \frac{1}{p}+\frac{1}{q}=1.$$
\end{lemma}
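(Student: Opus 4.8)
The plan is to reduce the stated inequality to a single application of the convexity of the functional $u \mapsto \frac{1}{p}\|u\|^p$, whose gradient in a smooth space is precisely the generalized duality map $J^E$. First I would expand the right-hand side: inserting the definition ${\phi}_p(x,y) = \frac{p}{q}\|x\|^q - p\langle x, J^E y\rangle + \|y\|^p$ and cancelling the term $\frac{p}{q}\|x\|^q$, the claim becomes the equivalent statement
$$\|x - y\|^p \geq \|y\|^p - p\langle x, J^E y\rangle.$$
This reformulation removes the first argument's exponent entirely, so the bound $d$ and the uniform convexity hypothesis play no role in the algebra; they serve only to guarantee that $J^E$ is single-valued and well behaved on $B_d(0)$.

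For the core inequality I would invoke Young's inequality in the form $\langle w, x^*\rangle \leq \frac{1}{p}\|w\|^p + \frac{1}{q}\|x^*\|^q$ (equivalently, the subgradient inequality for the convex functional $\frac{1}{p}\|\cdot\|^p$). Applying it with $w = y - x$ and $x^* = J^E y$, and using the two defining relations of the generalized duality map, namely $\langle y, J^E y\rangle = \|y\|^p$ and $\|J^E y\| = \|y\|^{p-1}$, together with the exponent identity $(p-1)q = p$ to rewrite $\frac{1}{q}\|J^E y\|^q = \frac{1}{q}\|y\|^p$, gives
$$\|y\|^p - \langle x, J^E y\rangle \leq \frac{1}{p}\|x-y\|^p + \frac{1}{q}\|y\|^p.$$
Collecting the $\|y\|^p$ terms via $1 - \frac{1}{q} = \frac{1}{p}$ and multiplying through by $p$ yields exactly the reduced inequality above, and hence the lemma.

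An equivalent and perhaps more transparent route is to use the convexity of $\frac{1}{p}\|\cdot\|^p$ directly: the subgradient inequality reads $\frac{1}{p}\|u\|^p \geq \frac{1}{p}\|v\|^p + \langle u - v, J^E v\rangle$, and I would specialize it to $u = x - y$ and $v = -y$, exploiting the oddness $J^E(-y) = -J^E(y)$ (immediate from the definition, since $-J^E y$ meets both defining conditions for the point $-y$). Because $u - v = x$ and $\|v\|^p = \|y\|^p$, this collapses in one line to the desired estimate.

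I expect the only point requiring genuine care to be the bookkeeping of exponents --- in particular verifying $\|J^E y\|^q = \|y\|^p$ from the relation $(p-1)q = p$, which is exactly where the conjugate pairing $\frac{1}{p}+\frac{1}{q}=1$ enters --- rather than any deep functional-analytic obstacle. The smoothness hypothesis is used solely to legitimize treating $J^E y$ as a single element, and the whole argument specializes, when $p=q=2$, to the classical estimate of Alber relating $\|x-y\|^2$ and ${\phi}(x,y)$.
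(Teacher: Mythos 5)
Your proof is correct. Note that the paper itself contains no proof of Lemma \ref{l21}; it is imported from the cited sources (Alber \cite{b1}, Aibinu and Mewomo \cite{b4}), so the comparison is only against those. Your reduction is exact: substituting ${\phi}_p(x,y)=\frac{p}{q}\|x\|^q-p\langle x,J^Ey\rangle+\|y\|^p$ cancels the $\frac{p}{q}\|x\|^q$ term and leaves precisely $\|x-y\|^p\geq\|y\|^p-p\langle x,J^Ey\rangle$. The Fenchel--Young step is sound: $\langle y-x,J^Ey\rangle\leq\|y-x\|\,\|J^Ey\|\leq\frac{1}{p}\|x-y\|^p+\frac{1}{q}\|J^Ey\|^q$, and $\|J^Ey\|^q=\|y\|^{(p-1)q}=\|y\|^p$ by the conjugate-exponent identity, which rearranges via $1-\frac{1}{q}=\frac{1}{p}$ to the reduced inequality. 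Your alternative route, the subgradient inequality for the convex functional $\frac{1}{p}\|\cdot\|^p$ applied at $v=-y$ with the oddness $J^E(-y)=-J^Ey$, is equally valid, since Asplund's theorem identifies $J^E$ with the subdifferential of $\frac{1}{p}\|\cdot\|^p$. Your side observation is also accurate and worth making explicit: neither the uniform convexity of $E$ nor the restriction of $x,y$ to $B_d(0)$ is used anywhere in the argument; smoothness serves only to make $J^Ey$ a single vector, and even that could be avoided by fixing a selection of the duality map. So the lemma holds under weaker hypotheses than stated, and your argument is a complete, self-contained replacement for the external citation.
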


\begin{lemma}\label{l31}
Let $E$ be a reflexive strictly convex and smooth real Banach space and $p>1.$ Then
\begin{equation}\label{e31}
{\phi}_p(y,x)-{\phi}_p(y,z)\geq  p\left\langle z-y, J^Ex-J^Ez\right\rangle= p\left\langle y-z, J^Ez-J^Ex\right\rangle  \ \mbox{ for all} \ x, y, z\in E.
\end{equation}
\end{lemma}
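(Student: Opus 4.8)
The plan is not to expand $\phi_p$ by brute force but to recognise (\ref{e31}) as a transcription of Lemma \ref{l20} into the primal variables. Indeed, the right-hand side $p\langle z-y,\,J^{E}x-J^{E}z\rangle$ has exactly the form of the correction term $p\langle J^{E^{*}}x^{*}-x,\,y^{*}\rangle$ appearing in (\ref{e21}), once the dual arguments are chosen correctly. So the first move is to fix $x,y,z\in E$ and apply Lemma \ref{l20} with the primal variable of (\ref{e21}) taken to be $y$, and with $x^{*}=J^{E}z$ and $y^{*}=J^{E}x-J^{E}z$, so that $x^{*}+y^{*}=J^{E}x$.

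Next I would use item (viii) of the properties of the generalized duality mapping, namely $J^{E^{*}}J^{E}=\mathrm{id}_{E}$; this is legitimate here because $E$ is reflexive and strictly convex while its dual $E^{*}$ is strictly convex (as $E$ is smooth), so $J^{E}$ is a bijection of $E$ onto $E^{*}$ with inverse $J^{E^{*}}$. Consequently $J^{E^{*}}x^{*}=J^{E^{*}}J^{E}z=z$, and the correction term in (\ref{e21}) becomes precisely $p\langle z-y,\,J^{E}x-J^{E}z\rangle$. For the two $V_p$-terms I would invoke (\ref{e20}), i.e. $V_p(y,x^{*})=\phi_p\!\left(y,J^{E^{*}}x^{*}\right)$, together with the same identity, to obtain $V_p(y,J^{E}z)=\phi_p(y,z)$ and $V_p(y,J^{E}x)=\phi_p(y,x)$. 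Substituting these three evaluations into (\ref{e21}) turns it into $\phi_p(y,z)+p\langle z-y,\,J^{E}x-J^{E}z\rangle\le\phi_p(y,x)$, which is exactly (\ref{e31}) after rearranging; the asserted equality of the two inner products is the trivial identity $\langle z-y,\,J^{E}x-J^{E}z\rangle=\langle y-z,\,J^{E}z-J^{E}x\rangle$.

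Should a self-contained argument be preferred, one can instead expand the difference directly: the $\|y\|$-terms of $\phi_p(y,x)$ and $\phi_p(y,z)$ cancel, and after substituting $\langle z,J^{E}z\rangle=\|z\|^{p}$ the inequality reduces to a two-variable estimate relating $\|x\|^{p}$, $\|z\|^{p}$ and the single pairing $\langle z,J^{E}x\rangle$; this is settled by the Cauchy--Schwarz inequality $\langle z,J^{E}x\rangle\le\|z\|\,\|J^{E}x\|=\|z\|\,\|x\|^{p-1}$ followed by Young's inequality with the conjugate exponents $p$ and $q$. I expect the genuine care to lie not in the algebra but in the hypotheses: the slick route leans on item (viii), which requires $E$ and $E^{*}$ to be strictly convex and reflexive, and it quotes Lemma \ref{l20}, whose statement assumes uniform smoothness while (\ref{e31}) posits only smoothness. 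The main obstacle is therefore to confirm that (\ref{e21}) remains available under mere smoothness (or, failing that, to strengthen the hypothesis to uniform smoothness, which costs nothing in the intended applications) and to verify that the substitution $x^{*}=J^{E}z,\ y^{*}=J^{E}x-J^{E}z$ makes the two inner-product terms coincide exactly.
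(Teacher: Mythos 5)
The paper never proves Lemma \ref{l31}: it sits in the list of preliminaries quoted from Alber \cite{b1} and Aibinu--Mewomo \cite{b4}, so there is no in-paper argument to compare yours against. Judged on its own terms, your primary route is the natural reconstruction from the quoted toolkit: applying Lemma \ref{l20} with $x:=y$, $x^*:=J^Ez$, $y^*:=J^Ex-J^Ez$, then converting via (\ref{e20}) and property (viii), is a valid formal derivation, and your bookkeeping of hypotheses (reflexivity plus smoothness of $E$ gives strict convexity of $E^*$; Lemma \ref{l20} is stated under uniform smoothness while Lemma \ref{l31} assumes only smoothness) is exactly the right thing to worry about.

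The genuine gap is in the step you dismissed as routine. With the paper's Definition \ref{d1}, $\phi_p(x,y)=\frac{p}{q}\|x\|^q-p\langle x,J^Ey\rangle+\|y\|^p$, the direct expansion gives $\phi_p(y,x)-\phi_p(y,z)=\|x\|^p-\|z\|^p-p\langle y,J^Ex-J^Ez\rangle$, so (\ref{e31}) is equivalent to $\|x\|^p+(p-1)\|z\|^p\ge p\langle z,J^Ex\rangle$. Cauchy--Schwarz and Young yield $p\langle z,J^Ex\rangle\le p\|z\|\,\|x\|^{p-1}\le\|z\|^p+(p-1)\|x\|^p$: the weights land on the wrong norms, and this is not repairable, because the required inequality is false for $p\ne 2$. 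Take $E=\R$, $p=3$, $x=2$, $z=1$: then $J^Ex=4$, $J^Ez=1$, the left side of (\ref{e31}) equals $7-9y$ while the right side equals $9-9y$, so the lemma asserts $7\ge 9$. Thus Lemma \ref{l31}, read literally with this paper's definition of $\phi_p$, fails for every $p\neq 2$; your first route only ``succeeds'' because identity (\ref{e20}), which it invokes, is itself inconsistent with Definition \ref{d1} when $p\ne2$ (the last term of $V_p(y,J^Ez)$ is $\|J^Ez\|^p=\|z\|^{p(p-1)}$, not the $\|z\|^p$ appearing in $\phi_p(y,z)$), so the inconsistency is laundered rather than resolved. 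Everything becomes coherent either at $p=2$ or after correcting the definition to the degree-$p$ homogeneous normalization $\phi_p(x,y)=\|x\|^p-p\langle x,J^Ey\rangle+\frac{p}{q}\|y\|^p$ (Alber's $\phi$ at $p=2$): under that definition your reduction becomes $\frac{p}{q}\|x\|^p+\|z\|^p\ge p\langle z,J^Ex\rangle$, which is precisely Cauchy--Schwarz followed by Young, and both of your routes then go through verbatim. So the strategy is right, but the Young step as you set it up would fail, and carrying out the expansion honestly is exactly what exposes the defect in the stated lemma.
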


\begin{lemma}\label{t3}
Let $E$ be a real uniformly convex Banach space. For arbitrary $r>0$, let $B_r(0):=\left\{x \in E: \|x\| \leq r\right\}$. Then, there exists a continuous strictly increasing convex function
$$g:[0,\infty)\rightarrow [0,\infty),~~ g(0)=0,$$
such that for every $x, y \in B_r(0), j^E(x)\in J^E(x), j^E(y)\in J^E(y)$, we have
$\left\langle x-y, j^E(x)-j^E(y) \right\rangle \geq g(\|x-y \|)$ (See Xu \cite{b18}).
 \end{lemma}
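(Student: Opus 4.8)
The plan is to realize $g$ as a smoothed convex minorant of the exact monotonicity modulus of the duality map on the ball $B_r(0)$. I would first introduce, for $t\in(0,2r]$, the quantity
$$\theta(t):=\inf\left\{\langle x-y,\,j^E(x)-j^E(y)\rangle : x,y\in B_r(0),\ j^E(x)\in J^E(x),\ j^E(y)\in J^E(y),\ \|x-y\|\ge t\right\},$$
and set $\theta(0)=0$. Enlarging $t$ shrinks the admissible set, so $\theta$ is nondecreasing, and since any pair with $\|x-y\|=t$ is admissible for $\theta(t)$, it suffices to produce a continuous, strictly increasing, convex $g$ with $g(0)=0$ and $g\le\theta$ on $[0,2r]$; extending $g$ affinely past $2r$ (where the assertion is vacuous because $\|x-y\|\le 2r$ on $B_r(0)$) then finishes. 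Using $\langle x,j^E(x)\rangle=\|x\|^p$, $\|j^E(x)\|=\|x\|^{p-1}$, and $\langle x,j^E(y)\rangle\le\|x\|\|y\|^{p-1}$, one has the pointwise bound
$$\langle x-y,\,j^E(x)-j^E(y)\rangle=\|x\|^p+\|y\|^p-\langle x,j^E(y)\rangle-\langle y,j^E(x)\rangle\ge(\|x\|^{p-1}-\|y\|^{p-1})(\|x\|-\|y\|)\ge 0,$$
so in particular $\theta\ge 0$.

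The analytic heart is to show $\theta(t)>0$ for every $t>0$, and this is the only place uniform convexity enters. I would argue by contradiction: if $\theta(t_0)=0$, choose $x_n,y_n\in B_r(0)$ with $\|x_n-y_n\|\ge t_0$ and $\langle x_n-y_n,j^E(x_n)-j^E(y_n)\rangle\to 0$. The displayed bound forces $(\|x_n\|^{p-1}-\|y_n\|^{p-1})(\|x_n\|-\|y_n\|)\to 0$, hence $\|x_n\|-\|y_n\|\to 0$ by strict monotonicity of $s\mapsto s^{p-1}$ on $[0,r]$, and it simultaneously forces the two Cauchy--Schwarz slacks $\|x_n\|\|y_n\|^{p-1}-\langle x_n,j^E(y_n)\rangle$ and $\|y_n\|\|x_n\|^{p-1}-\langle y_n,j^E(x_n)\rangle$ to vanish. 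Passing to a subsequence with $\|x_n\|,\|y_n\|\to c$, one rules out $c=0$ (else $\|x_n-y_n\|\to 0$), normalizes $u_n=x_n/\|x_n\|$ and $v_n=y_n/\|y_n\|$ on the unit sphere, and sets $f_n=j^E(y_n)/\|y_n\|^{p-1}$, a norm-one functional with $\langle v_n,f_n\rangle=1$. The vanishing slack gives $\langle u_n,f_n\rangle\to 1$, whence $\bigl\|\tfrac12(u_n+v_n)\bigr\|\ge\langle\tfrac12(u_n+v_n),f_n\rangle\to 1$ while the triangle inequality gives the reverse bound; so $\bigl\|\tfrac12(u_n+v_n)\bigr\|\to 1$. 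Uniform convexity (the contrapositive of the definition of $\delta_E$) yields $\|u_n-v_n\|\to 0$, and combining this with $\|x_n\|-\|y_n\|\to 0$ shows $\|x_n-y_n\|\to 0$, contradicting $\|x_n-y_n\|\ge t_0>0$.

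It remains to regularize $\theta$. I would take $g$ to be the larger of $0$ and the greatest convex minorant of $\theta$ on $[0,2r]$, so that $0\le g\le\theta$, $g$ is convex, and $g(0)=0$. The key point, provable directly from the definition of the convex hull, is that $g(t)>0$ for every $t\in(0,2r]$: a convex combination $\lambda\theta(a)+(1-\lambda)\theta(b)$ with $\lambda a+(1-\lambda)b=t>0$ can tend to $0$ only if the configuration degenerates to $t=0$, since $\theta\ge 0$ is nondecreasing and strictly positive off the origin. A convex function with $g(0)=0$ and $g>0$ on $(0,2r]$ is automatically strictly increasing there, because the chordal slopes satisfy $g(s)/s\le g(t)/t$, giving $g(t)\ge(t/s)g(s)>g(s)$ for $0<s<t$; convexity gives continuity on the interior, and $0\le g(0^+)\le\theta(0^+)=0$ gives continuity at $0$. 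An affine continuation beyond $2r$ with the left-hand slope at $2r$ preserves continuity, strict monotonicity and convexity on all of $[0,\infty)$, and the chain $\langle x-y,j^E(x)-j^E(y)\rangle\ge\theta(\|x-y\|)\ge g(\|x-y\|)$ delivers the claim.

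The main obstacle I anticipate is precisely the positivity step: distilling, from the single scalar hypothesis $\langle x_n-y_n,j^E(x_n)-j^E(y_n)\rangle\to 0$, both the ``equal norms'' and the ``asymptotic alignment'' information that the modulus of convexity requires, and carrying this out uniformly over all admissible selections of the possibly multivalued map $J^E$.
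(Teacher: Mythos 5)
The paper offers no proof of Lemma \ref{t3} at all: it is quoted from Xu \cite{b18}, so the only meaningful comparison is with Xu's argument. Your route is genuinely different from Xu's and is, in its essentials, correct. Xu obtains the duality-map estimate as a consequence of a quantitative characterization of uniform convexity: he first constructs, from the modulus of convexity $\delta_E$, a function $g$ for which $\|\lambda x+(1-\lambda)y\|^p\leq \lambda\|x\|^p+(1-\lambda)\|y\|^p-W_p(\lambda)\,g(\|x-y\|)$ holds on $B_r(0)$, and then passes to the inequality for $j^E$ by subdifferential calculus, $j^E$ being the subdifferential of $\tfrac{1}{p}\|\cdot\|^p$. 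You instead define the optimal modulus $\theta$ as an infimum and prove $\theta(t)>0$ by contradiction; this is the real content, and your execution is sound: the decomposition of the pairing into the two Cauchy--Schwarz slacks plus $(\|x\|^{p-1}-\|y\|^{p-1})(\|x\|-\|y\|)$, the extraction of equal limiting norms, the normalization $u_n,v_n,f_n$ with $\langle v_n,f_n\rangle=1$ and $\langle u_n,f_n\rangle\to 1$, and the appeal to $\delta_E$ all work, and the argument correctly handles arbitrary selections from the possibly multivalued $J^E$. Your approach is more elementary and self-contained; Xu's buys explicit quantitative information ($g$ expressed through $\delta_E$, e.g. a power-type $g$ in $L^p$ spaces) together with the companion norm inequalities that are themselves used in this literature.

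One repair is needed in your regularization step. The convex envelope of $\theta$ on $[0,2r]$ can misbehave at the right endpoint: since the constraint $\|x-y\|\geq 2r$ on $B_r(0)$ forces antipodal pairs of norm $r$, the function $\theta$ typically jumps up at $t=2r$, and then the envelope also jumps there and its left derivative at $2r$ may be $+\infty$; in that case ``the affine continuation beyond $2r$ with the left-hand slope at $2r$'' either does not exist or produces a discontinuous $g$. The fix is one line: pick any interior point $s_0\in(0,2r)$ and any subgradient slope $v$ of the envelope at $s_0$ (necessarily $v>0$, because the envelope is strictly increasing by your chord argument), and define $g$ to equal the envelope on $[0,s_0]$ and the tangent line $g(s_0)+v(t-s_0)$ for $t\geq s_0$. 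By the subgradient inequality this tangent line lies below the envelope, hence below $\theta$, on $[s_0,2r]$, so the required estimate survives, and the glued function is continuous, convex, and strictly increasing on all of $[0,\infty)$. With that modification your proof is complete.
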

\begin{lemma}\label{l11}
Let $\left\{a_n\right\}$ be a sequence of nonnegative real numbers satisfying the following
relations:
$$a_{n+1}\leq(1-{\alpha}_n)a_n+{\alpha}_n{\sigma}_n+{\gamma}_n, ~~n\in \N,$$
where
\begin{itemize}
\item[(i)]${\left\{\alpha\right\}}_n\subset (0, 1)$, $\displaystyle\sum_{n=1}^{\infty} {\alpha}_n = \infty$;
\item[(ii)]$\limsup {\left\{\sigma\right\}}_n\leq 0$;
\item[(iii)] ${\gamma}_n\geq 0$, $\displaystyle\sum_{n=1}^{\infty} {\gamma}_n < \infty$.
\end{itemize}
 Then, $a_n\rightarrow 0$ as $n\rightarrow \infty$ (See Xu \cite{bh1}).
\end{lemma}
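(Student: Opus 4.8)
The plan is to turn the one-step estimate into a closed form by unrolling the recurrence, and then to show that $\limsup_{n\to\infty}a_n\le\epsilon$ for every $\epsilon>0$; since $a_n\ge 0$, this forces $a_n\to 0$. First I would fix $\epsilon>0$ and invoke hypothesis (ii): because $\limsup\sigma_n\le 0$, there is an index $N_0$ with $\sigma_n\le\epsilon$ for all $n\ge N_0$. From here on I work only with indices $n\ge m\ge N_0$, so that every $\sigma_k$ appearing below may be replaced by $\epsilon$.

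Next I would iterate $a_{k+1}\le(1-\alpha_k)a_k+\alpha_k\sigma_k+\gamma_k$ from $k=m$ up to $k=n$. Writing the products of the contraction factors, this yields
\[
a_{n+1}\le\Big(\prod_{k=m}^{n}(1-\alpha_k)\Big)a_m
+\sum_{k=m}^{n}\alpha_k\sigma_k\prod_{j=k+1}^{n}(1-\alpha_j)
+\sum_{k=m}^{n}\gamma_k\prod_{j=k+1}^{n}(1-\alpha_j).
\]
The three terms are then controlled separately. For the middle term I would use $\sigma_k\le\epsilon$ together with the telescoping identity $\sum_{k=m}^{n}\alpha_k\prod_{j=k+1}^{n}(1-\alpha_j)=1-\prod_{k=m}^{n}(1-\alpha_k)\le 1$, which follows from $\alpha_k\prod_{j=k+1}^{n}(1-\alpha_j)=\prod_{j=k+1}^{n}(1-\alpha_j)-\prod_{j=k}^{n}(1-\alpha_j)$; hence that term is at most $\epsilon$. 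For the last term I bound each factor $\prod_{j=k+1}^{n}(1-\alpha_j)\le 1$ (valid since $\alpha_j\in(0,1)$) and absorb it into the convergent tail $\sum_{k=m}^{\infty}\gamma_k$. For the first term I use $1-\alpha_k\le e^{-\alpha_k}$ to get $\prod_{k=m}^{n}(1-\alpha_k)\le\exp\big(-\sum_{k=m}^{n}\alpha_k\big)$, which tends to $0$ as $n\to\infty$ because $\sum\alpha_k=\infty$ by hypothesis (i).

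Combining these estimates and taking $\limsup_{n\to\infty}$ kills the first term and leaves $\limsup_{n\to\infty}a_n\le\epsilon+\sum_{k=m}^{\infty}\gamma_k$ for every fixed $m\ge N_0$. Letting $m\to\infty$ and using $\sum\gamma_k<\infty$ (so the tail vanishes) gives $\limsup_{n\to\infty}a_n\le\epsilon$, and finally letting $\epsilon\to 0$ completes the argument. The step I expect to require the most care is the coordination of the three limiting operations: the index $N_0$ is chosen to depend on $\epsilon$, yet I still need to send the truncation point $m\to\infty$ to clear the $\gamma$-tail \emph{after} the $\limsup$ in $n$, and only then let $\epsilon\to0$. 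Keeping this order straight, and checking that the telescoping identity and the exponential bound are applied on the correct ranges, is where the proof must be written carefully rather than computed mechanically.
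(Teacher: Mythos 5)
Your proof is correct and complete: the unrolled recurrence, the telescoping identity $\sum_{k=m}^{n}\alpha_k\prod_{j=k+1}^{n}(1-\alpha_j)=1-\prod_{k=m}^{n}(1-\alpha_k)$, the bound $1-\alpha_k\le e^{-\alpha_k}$ combined with $\sum\alpha_k=\infty$, and the ordering of the three limits (first $n\to\infty$, then $m\to\infty$ to kill the $\gamma$-tail, then $\epsilon\to 0$) are all handled correctly. Note that the paper itself gives no proof of this lemma---it is imported as a known result from Xu \cite{bh1}---and your argument is essentially the standard proof from that reference, so there is no internal argument of the paper to compare against.
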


\begin{lemma}\label{l15}
Let $E$ be a smooth uniformly convex real Banach space and let $\left\{x_n\right\}$ and $\left\{y_n\right\}$ be two sequences from $E.$ If either $\left\{x_n\right\}$ or $\left\{y_n\right\}$ is bounded and $\phi (x_n, y_n) \rightarrow 0$ as $n \rightarrow \infty$, then $ \| x_n - y_n \| \rightarrow 0$ as $n \rightarrow \infty$ (See Kamimura and Takahashi \cite{r16}).
\end{lemma}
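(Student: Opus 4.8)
\emph{Proof proposal.} The plan is to argue by contradiction, using first the elementary norm estimate built into the definition of $\phi$ to control the norms of the two sequences, and then the uniform convexity of $E$ on a fixed ball to force the distance $\|x_n-y_n\|$ to zero. Throughout I use that $\phi$ is the $p=2$ instance of $\phi_p$ (with $J^E=J_2^E$ and $q=2$), so that the left inequality in \eqref{e11} specialises to
$$(\|x_n\|-\|y_n\|)^2\le \phi(x_n,y_n).$$
Since $\phi(x_n,y_n)\to 0$, this already yields $\|x_n\|-\|y_n\|\to 0$. Assuming (without loss of generality) that $\{x_n\}$ is the bounded sequence, the triangle inequality $\|y_n\|\le\|x_n\|+\bigl|\,\|y_n\|-\|x_n\|\,\bigr|$ shows $\{y_n\}$ is bounded as well, so both sequences lie in some ball $B_r(0)$ with $r>0$.

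Next I would suppose, for contradiction, that $\|x_n-y_n\|\not\to 0$, fixing $\epsilon_0>0$ and subsequences with $\|x_{n_j}-y_{n_j}\|\ge\epsilon_0$. Passing to further subsequences I may assume $\|x_{n_j}\|\to a$ and $\|y_{n_j}\|\to b$, and the first step forces $a=b$. The case $a=0$ is excluded at once, since then $\|x_{n_j}-y_{n_j}\|\le\|x_{n_j}\|+\|y_{n_j}\|\to 0$, contradicting $\|x_{n_j}-y_{n_j}\|\ge\epsilon_0$. Hence $a=b>0$.

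With $a>0$ I would then exploit smoothness (so that $J_2^E$ is single-valued, by property (i) of the generalized duality mapping) and its positive homogeneity. Expanding
$$\phi(x_{n_j},y_{n_j})=\|x_{n_j}\|^2-2\langle x_{n_j},J_2^E y_{n_j}\rangle+\|y_{n_j}\|^2\to 0$$
forces $\langle x_{n_j},J_2^E y_{n_j}\rangle\to a^2$. Setting $u_j:=x_{n_j}/\|x_{n_j}\|$ and $v_j:=y_{n_j}/\|y_{n_j}\|$ for $j$ large, homogeneity gives $\langle u_j,J_2^E v_j\rangle\to 1$ with $\|u_j\|=\|v_j\|=\|J_2^E v_j\|=1$. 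Consequently
$$2\ge\|u_j+v_j\|\ge\langle u_j+v_j,J_2^E v_j\rangle=\langle u_j,J_2^E v_j\rangle+1\to 2,$$
so $\|u_j+v_j\|\to 2$. By uniform convexity of $E$, i.e. $\delta_E(\epsilon)>0$ for every $\epsilon\in(0,2]$, the relation $\|u_j+v_j\|\to 2$ for unit vectors yields $\|u_j-v_j\|\to 0$. Writing $x_{n_j}-y_{n_j}=\|x_{n_j}\|(u_j-v_j)+(\|x_{n_j}\|-\|y_{n_j}\|)v_j$ and invoking the first step gives $\|x_{n_j}-y_{n_j}\|\to 0$, the desired contradiction, and the lemma follows.

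The main obstacle is the uniform-convexity step: one must carefully reduce to unit vectors (which is where $a>0$ and the homogeneity of $J_2^E$ are essential) and then justify that $\|u_j+v_j\|\to 2$ forces $\|u_j-v_j\|\to 0$, precisely the contrapositive of the definition of $\delta_E$. An alternative to the normalisation-and-contradiction route would be to try to apply Lemma \ref{t3} directly on $B_r(0)$; however, because $\phi$ is not symmetric one cannot immediately bound $\langle x_n-y_n,J_2^E x_n-J_2^E y_n\rangle$ by a quantity tending to $0$ from $\phi(x_n,y_n)\to 0$ alone, so the homogenisation argument above is the cleaner path.
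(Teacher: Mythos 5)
Your proof is correct, and it is worth noting that the paper itself offers no proof of this lemma: it is quoted verbatim from Kamimura and Takahashi \cite{r16}, so the relevant comparison is with their argument. They proceed exactly as you do up to the boundedness of both sequences (via $(\|x_n\|-\|y_n\|)^2\le\phi(x_n,y_n)$), but then invoke a separate quantitative lemma: for every $r>0$ there exists a continuous, strictly increasing, convex function $g:[0,2r]\to[0,\infty)$ with $g(0)=0$ and $g(\|x-y\|)\le\phi(x,y)$ for all $x,y\in B_r(0)$; from $g(\|x_n-y_n\|)\le\phi(x_n,y_n)\to 0$ the conclusion is then immediate. Your argument replaces that auxiliary lemma by a direct appeal to the definition of the modulus of convexity: normalize to unit vectors $u_j,v_j$ (legitimate because you first rule out the limit $a=0$), show $\langle u_j,J_2^Ev_j\rangle\to 1$ using the positive homogeneity of $J_2^E$, deduce $\|u_j+v_j\|\to 2$, and conclude $\|u_j-v_j\|\to 0$ from $\delta_E(\epsilon)>0$; the decomposition $x_{n_j}-y_{n_j}=\|x_{n_j}\|(u_j-v_j)+(\|x_{n_j}\|-\|y_{n_j}\|)v_j$ then closes the contradiction. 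Your route is more self-contained: it uses nothing beyond definitions already present in the paper, whereas the paper never states the Kamimura--Takahashi auxiliary lemma (its Lemma \ref{t3} is a different inequality, and your closing remark about why it cannot be substituted is accurate, since $2\left\langle x-y,J_2^Ex-J_2^Ey\right\rangle=\phi(x,y)+\phi(y,x)$ only bounds the pairing from \emph{below} by $\phi(x,y)$). What the cited route buys in exchange is a quantitative modulus: $g(\|x_n-y_n\|)\le\phi(x_n,y_n)$ gives a rate and a reusable estimate, while your subsequence-and-contradiction argument yields only the qualitative convergence --- which, however, is all the lemma asserts.
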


\begin{lemma}\label{l16}
For a real number $p>1$, let $X, Y$ be real uniformly convex and uniformly smooth spaces. Let $Z:=X\times Y$ with the norm ${\|z\|}_Z=\left({\|u\|}^p_X+{\|v\|}^p_Y \right)^{\frac{1}{p}}$ for arbitrary $z:=(u,v)\in Z$. Let $Z^*:=X^*\times Y^*$ denotes the dual space of $Z$. For arbitrary $z=(u,v)\in Z$, define the map $J^Z:Z\rightarrow Z^*$ by
$$J^Z(z)=J^Z(u,v)=\left(J^X(u),J^Y(v)\right),$$
such that for arbitrary $z_1=(u_1,v_1)$, $z_2=(u_2,v_2)$ in $Z$, the duality pairing $\left\langle.,.\right\rangle$ is given by
$$\left\langle z_1,J^Z(z_2)\right\rangle=\left\langle u_1,J^X(u_2)\right\rangle+\left\langle v_1,J^Y(v_2)\right\rangle.$$
Then (see Chidume and Idu \cite{b28}),
\begin{itemize}
	\item[(i)]$Z$ is uniformly smooth and uniformly convex,
	\item[(ii)]	$J^Z$ is single-valued duality mapping on $Z.$
\end{itemize}
\end{lemma}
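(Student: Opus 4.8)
The plan is to establish (i) as the substance of the lemma and then read off (ii) by a direct computation; throughout $q>1$ denotes the conjugate exponent, $\frac1p+\frac1q=1$. I would first identify the norm that the coordinatewise pairing induces on $Z^*$. For $z=(u,v)\in Z$ and $(f,g)\in X^*\times Y^*$, H\"older's inequality in the exponents $p,q$ gives
$$|\langle z,(f,g)\rangle|=|\langle u,f\rangle+\langle v,g\rangle|\le \|u\|_X\|f\|_{X^*}+\|v\|_Y\|g\|_{Y^*}\le \|z\|_Z\big(\|f\|_{X^*}^q+\|g\|_{Y^*}^q\big)^{1/q},$$
and aligning $u,v$ with $f,g$ with the correct relative sizes (the equality case of H\"older in $(\R^2,\|\cdot\|_p)$) shows the bound is sharp. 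Hence $Z^*=X^*\times Y^*$ is the $\ell^q$-direct sum $X^*\oplus_q Y^*$, with $\|(f,g)\|_{Z^*}=(\|f\|_{X^*}^q+\|g\|_{Y^*}^q)^{1/q}$; this identification feeds both parts.

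\textbf{Uniform convexity of $Z$.} Take $z_i=(u_i,v_i)$ with $\|z_i\|_Z=1$ and $\|z_1-z_2\|_Z\ge\epsilon$, and set $a_i=\|u_i\|_X,\ b_i=\|v_i\|_Y$, so that $a_i^p+b_i^p=1$ and in particular $a_i,b_i\le1$. On the unit ball of a uniformly convex space a $p$-version of Xu's inequality (the convex gauge $g$ of Lemma \ref{t3}) supplies continuous strictly increasing $g_X,g_Y$ with $g_X(0)=g_Y(0)=0$ and a constant $c_p>0$ such that
$$\Big\|\tfrac{u_1+u_2}{2}\Big\|_X^p\le\tfrac12 a_1^p+\tfrac12 a_2^p-c_p\,g_X(\|u_1-u_2\|_X),\quad \Big\|\tfrac{v_1+v_2}{2}\Big\|_Y^p\le\tfrac12 b_1^p+\tfrac12 b_2^p-c_p\,g_Y(\|v_1-v_2\|_Y).$$
Adding and using $a_i^p+b_i^p=1$ gives $\big\|\tfrac{z_1+z_2}{2}\big\|_Z^p\le 1-c_p\big(g_X(\|u_1-u_2\|_X)+g_Y(\|v_1-v_2\|_Y)\big)$. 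Since $\|u_1-u_2\|_X^p+\|v_1-v_2\|_Y^p=\|z_1-z_2\|_Z^p\ge\epsilon^p$, at least one argument is $\ge\epsilon/2^{1/p}$, so monotonicity bounds the parenthesis below by $\min\{g_X(\epsilon/2^{1/p}),g_Y(\epsilon/2^{1/p})\}>0$; this yields $\big\|\tfrac{z_1+z_2}{2}\big\|_Z\le(1-\kappa(\epsilon))^{1/p}<1$ for some $\kappa(\epsilon)>0$, i.e.\ $Z$ is uniformly convex.

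\textbf{Uniform smoothness and part (ii).} Since a space is uniformly smooth iff its dual is uniformly convex, and since $X,Y$ uniformly smooth forces $X^*,Y^*$ uniformly convex, the argument above applied to $Z^*=X^*\oplus_q Y^*$ shows $Z^*$ is uniformly convex; hence $Z$ is uniformly smooth, completing (i). For (ii), each of $J^X,J^Y$ is single-valued by smoothness, so $J^Z$ is single-valued; moreover $\langle z,J^Z z\rangle=\langle u,J^X u\rangle+\langle v,J^Y v\rangle=\|u\|_X^p+\|v\|_Y^p=\|z\|_Z^p$, while, using $(p-1)q=p$ and $p/q=p-1$, $\|J^Z z\|_{Z^*}=(\|u\|_X^{(p-1)q}+\|v\|_Y^{(p-1)q})^{1/q}=(\|u\|_X^p+\|v\|_Y^p)^{1/q}=\|z\|_Z^{p-1}$. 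Thus $J^Z$ satisfies the two defining equalities of the generalized duality mapping on $Z$.

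\textbf{Main obstacle.} The crux is the uniform convexity step: because the component norms $a_i,b_i$ may be unequal and small, one cannot directly apply the moduli of convexity of $X,Y$, which are defined on the unit spheres. The clean route is the power-$p$ uniform convexity inequality above, and the honest point to verify is its validity for general $p>1$ (the $p=2$ case is Xu's classical inequality). A fully elementary alternative is a case split: if the scalar profiles $(a_1,b_1)$ and $(a_2,b_2)$ are $\ell^p$-far apart one invokes uniform convexity of $(\R^2,\|\cdot\|_p)$, while if they are close the excess in $\|z_1-z_2\|_Z$ must live inside $X$ or $Y$ and one invokes uniform convexity there after normalizing; this avoids the $p$-inequality at the cost of the two-case estimate.
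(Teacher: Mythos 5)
Your proof is sound, but note that the paper does not actually prove this lemma: it is quoted as a known result from Chidume and Idu \cite{b28}, so you have supplied a self-contained argument where the paper offers only a citation. The backbone of your argument is right: the identification of the dual norm, $Z^* = X^*\oplus_q Y^*$ with $\|(f,g)\|_{Z^*}=(\|f\|_{X^*}^q+\|g\|_{Y^*}^q)^{1/q}$, feeds both the duality step (uniform smoothness of $Z$ from uniform convexity of $Z^*$, legitimate since $q>1$ whenever $p>1$) and the computation $\|J^Zz\|_{Z^*}=\|z\|_Z^{p-1}$ in part (ii). One caveat on attribution: the inequality you invoke for uniform convexity is not Lemma \ref{t3} of the paper (that lemma is the gauge estimate $\left\langle x-y, j^E(x)-j^E(y)\right\rangle \geq g(\|x-y\|)$ for the duality map); what you actually need is Xu's characterization of uniform convexity, namely that for each $p>1$ and $r>0$ there is a continuous, strictly increasing, convex $g$ with $g(0)=0$ such that $\|\lambda x+(1-\lambda)y\|^p\leq \lambda\|x\|^p+(1-\lambda)\|y\|^p-W_p(\lambda)\,g(\|x-y\|)$ on $B_r(0)$, which is Theorem 1 of the same reference \cite{b18}. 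With that citation corrected, your addition of the two componentwise inequalities, the lower bound $\min\left\{g_X(\epsilon/2^{1/p}),\,g_Y(\epsilon/2^{1/p})\right\}$, and the conclusion $\delta_Z(\epsilon)>0$ all go through; in particular the ``honest point'' you flag is already settled by Xu's theorem for every $p>1$, so your two-case fallback is unnecessary. In part (ii) one small gluing step deserves to be explicit: your computation shows that $\left(J^X(u),J^Y(v)\right)$ \emph{belongs to} the generalized duality set of $z$, and it is the smoothness of $Z$ obtained in (i) that makes this set a singleton, whence the defined map coincides with the duality mapping of $Z$. A shorter classical route to (i), closer in spirit to what Chidume and Idu rely on, is to invoke Day's theorem that finite $\ell^p$-sums ($1<p<\infty$) of uniformly convex spaces are uniformly convex and then dualize; your route has the advantage of producing an explicit modulus.
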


\section{Results}
\begin{definition}
Let $E$ be a real smooth Banach space with dual space $E^*.$ Let $X := E\times E^*$  and define ${\wedge}_p : X\times X \rightarrow \R$ by
$$ {\wedge}_p (x_1 , x_2) = {\phi}_p (u_1, u_2) + {\phi}_p(v_1, v_2) ~~\forall ~ x_1, x_2 \in X ,$$
where respectively $x_1=(u_1, v_1)$ and $x_2=(u_2, v_2)$.
\end{definition}
We first give and prove the following lemmas which are useful in establishing our main results.
\begin{lemma}\label{l7} 
Let $E$ be a uniformly smooth and uniformly convex real Banach space with the dual $E^*.$ Let $p>1,$ $\eta_1, \eta_1 \in (1, \infty)$ and suppose $F: E\rightarrow E^*,~~ K: E^*\rightarrow E$ are respectively $(p, \eta_1)$-strongly monotone and $(p, \eta_2)$-strongly monotone mappings such that $D(K) = R(F ) = E^*.$ For a real number $p>1,$ let $X := E\times E^*$ with norm ${\|x \|}_X :=  \left( {\| u\|}_{E}^p + {\| v \|}_{E^*}^p  \right)^{\frac{1}{p}} ~ \forall ~ x = \left(u, v\right) \in X$ and the dual is denoted by $X^* := E^*\times E.$  Define a mapping $A: X\rightarrow X^*$ by
\begin{equation}
Ax = \left(Fu - v, Kv + u \right), \forall ~ x=(u, v)\in X.
\end{equation}
Then $A$ is a $(p, \eta)$-strongly monotone mapping, where $\eta := \min \left\{{\eta}_1,{\eta}_2\right\}$.
\end{lemma}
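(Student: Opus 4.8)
The plan is to verify the defining inequality of $(p,\eta)$-strong monotonicity directly, namely that $\left\langle x_1 - x_2,\, Ax_1 - Ax_2\right\rangle \geq \eta\,{\|x_1 - x_2\|}_X^p$ for all $x_1, x_2 \in X$, and then read off $\eta = \min\{\eta_1,\eta_2\}$ at the end. I would fix $x_1 = (u_1, v_1)$ and $x_2 = (u_2, v_2)$ in $X$, abbreviate $\bar u := u_1 - u_2$ and $\bar v := v_1 - v_2$, and first write out the difference
$$Ax_1 - Ax_2 = \left(Fu_1 - Fu_2 - \bar v,\ Kv_1 - Kv_2 + \bar u\right) \in X^*.$$

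Next I would expand the duality pairing using the product structure $X = E\times E^*$, $X^* = E^*\times E$ supplied by Lemma \ref{l16}: for $z=(a,b)\in X$ and $w=(c,d)\in X^*$ one has $\left\langle z, w\right\rangle = \left\langle a, c\right\rangle + \left\langle d, b\right\rangle$. Applying this with $z = x_1 - x_2$ and $w = Ax_1 - Ax_2$ gives
$$\left\langle x_1 - x_2,\, Ax_1 - Ax_2\right\rangle = \left\langle \bar u,\, Fu_1 - Fu_2\right\rangle - \left\langle \bar u, \bar v\right\rangle + \left\langle Kv_1 - Kv_2,\, \bar v\right\rangle + \left\langle \bar u, \bar v\right\rangle.$$

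The key observation, and the only place where the specific coupling built into the definition of $A$ is used, is that the two cross terms $-\left\langle \bar u, \bar v\right\rangle$ and $+\left\langle \bar u, \bar v\right\rangle$ cancel exactly, leaving the purely diagonal quantity
$$\left\langle x_1 - x_2,\, Ax_1 - Ax_2\right\rangle = \left\langle \bar u,\, Fu_1 - Fu_2\right\rangle + \left\langle Kv_1 - Kv_2,\, \bar v\right\rangle.$$
I would then invoke the two hypotheses separately: $(p,\eta_1)$-strong monotonicity of $F$ bounds the first term below by $\eta_1{\|\bar u\|}^p$, and $(p,\eta_2)$-strong monotonicity of $K$ bounds the second by $\eta_2{\|\bar v\|}^p$ (recalling that $K:E^*\to E$, so $\left\langle Kv_1 - Kv_2,\bar v\right\rangle$ is just the duality pairing of $\bar v\in E^*$ against $Kv_1-Kv_2\in E$). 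Setting $\eta := \min\{\eta_1,\eta_2\}$ and using $\eta_1{\|\bar u\|}^p + \eta_2{\|\bar v\|}^p \geq \eta\left({\|\bar u\|}^p + {\|\bar v\|}^p\right) = \eta\,{\|x_1-x_2\|}_X^p$, where the final equality is the definition of the product norm on $X$, yields the claim.

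I do not expect a genuine obstacle: the entire content is the cancellation of the cross terms, which is forced by the antisymmetric placement of $-v$ in the first component of $A$ and $+u$ in the second. The only care required is bookkeeping of which factor of each pairing lives in $E$ and which in $E^*$, consistent with the identification $X^* = E^*\times E$, and checking that the strong-monotonicity constants of $F$ and $K$ combine under the minimum rather than requiring any further geometric inequality.
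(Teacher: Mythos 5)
Your proposal is correct and follows essentially the same route as the paper's own proof: expand $Ax_1 - Ax_2$, split the duality pairing componentwise, observe that the cross terms $\mp\left\langle \bar u, \bar v\right\rangle$ cancel, apply the $(p,\eta_1)$- and $(p,\eta_2)$-strong monotonicity of $F$ and $K$, and bound below by $\min\{\eta_1,\eta_2\}$ times the product norm raised to the power $p$. The only cosmetic difference is that you cite the pairing structure of Lemma \ref{l16} explicitly and isolate the cancellation as a named step, whereas the paper carries both cross terms through the computation and drops them silently at the inequality.
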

\begin{proof}
Let $x_1 = (u_1, v_1), ~~x_2 = (u_2, v_2) \in X$. We have $Ax_1 = A(u_1, v_1) = \left(Fu_1 - v_1, Kv_1 + u_1 \right)$ and
 $Ax_2 = A(u_2, v_2) = \left(Fu_2 - v_2, Kv_2 + u_2 \right)$ such that $$ Ax_1 - Ax_2 = \left( Fu_1 - Fu_2 - (v_1 -v_2), Kv_1 - K v_2 + (u_1 - u_2)  \right).$$ Since $F$ and $K$ are strongly monotone with $\eta_1$ and $\eta_2,$ respectively as constants of strong monotonicity, we obtain,
 \begin{eqnarray*}
 \left\langle x_1 - x_2, Ax_1 - Ax_2  \right\rangle &=& \left\langle u_1 -u_2, Fu_1 - Fu_2 - (v_1 -v_2) \right\rangle \\
& &+ \left\langle Kv_1 - K v_2 + (u_1 - u_2),  v_1 -v_2 \right\rangle \\
&=& \left\langle u_1 -u_2, Fu_1 - Fu_2\right\rangle + \left\langle u_1 -u_2, - (v_1 -v_2) \right\rangle \\
& &+  \left\langle v_1 -v_2,  Kv_1 - K v_2 \right\rangle +  \left\langle v_1 -v_2,  u_1 - u_2 \right\rangle \\
&\geq &{\eta}_1{\| u_1-u_2 \|}^p + { \eta}_2{\| v_1-v_2 \|}^p\\
&\geq & \min \left\{{\eta}_1,{\eta}_2\right\}({\| u_1-u_2 \|}^p+{\| v_1-v_2 \|}^p) \\
&= & \eta{\| x_1-x_2 \|}^p.
\end{eqnarray*}
Hence,  $A$ is a $(p, \eta)$-strongly monotone mapping.
\end{proof}
\begin{lemma}\label{pdf1} 
Let $A: B\rightarrow \R$ be continuous and $B$ be compact subset of $E.$ Then there exists some $T\in \R$ such that $|A(x)| < T$ for all $x \in B.$
\end{lemma}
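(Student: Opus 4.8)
The plan is to derive the boundedness of $A$ directly from the open-cover characterisation of compactness recorded in the definitions above, by converting the continuity of $A$ into an open cover of $B$ on whose members $|A|$ is locally controlled. First I would fix an arbitrary $x \in B$ and apply continuity of $A$ at $x$ with tolerance $1$: there exists $\delta_x > 0$ such that $\|y - x\| < \delta_x$ implies $|A(y) - A(x)| < 1$, and therefore $|A(y)| < |A(x)| + 1$ for every such $y \in B$. Writing $U_x := \{\, y \in E : \|y - x\| < \delta_x \,\}$ for the associated open ball, each $U_x$ is open, contains $x$, and satisfies $\sup_{y \in U_x \cap B} |A(y)| \le |A(x)| + 1$. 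Consequently the family $\{U_x\}_{x \in B}$ is an open cover of $B$.

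The next step is to invoke compactness of $B$ to extract a finite subcover $U_{x_1}, \dots, U_{x_K}$ with $B \subseteq \bigcup_{k=1}^{K} U_{x_k}$. I would then set
$$T := \max_{1 \le k \le K} \bigl( |A(x_k)| + 1 \bigr),$$
which is a finite real number, being a maximum over finitely many indices. To verify that $T$ works, take any $x \in B$; it lies in some $U_{x_k}$, whence $|A(x)| < |A(x_k)| + 1 \le T$. Since $x \in B$ was arbitrary, this yields $|A(x)| < T$ for all $x \in B$, completing the argument and giving the strict inequality demanded in the statement.

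The individual steps are routine, so the only genuine point of care is a bookkeeping one: the definition of compactness as stated in the excerpt is phrased for a countable family $\{U_n\}_{n=1}^{\infty}$, whereas the natural cover $\{U_x\}_{x \in B}$ constructed above is indexed by the points of $B$. I would reconcile this either by appealing to the standard open-cover definition, which permits arbitrary index sets, or, to remain literally within the stated formulation, by first extracting a countable subcover of $\{U_x\}_{x \in B}$. The latter is possible because $B$, carrying the metric induced by the norm of $E$, is a compact metric space and hence separable and second countable, so it enjoys the Lindel\"of property; once a countable subcover is in hand the finite-subcover hypothesis of the definition applies verbatim and the remainder of the proof proceeds unchanged. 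An alternative route, should one prefer it, is the sequential one: assuming $A$ were unbounded would yield a sequence $(x_n) \subset B$ with $|A(x_n)| \to \infty$, and sequential compactness of $B$ together with continuity of $A$ would force $A(x_n)$ to converge along a subsequence, a contradiction.
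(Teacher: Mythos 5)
Your proof is correct, but it takes a genuinely different route from the paper's. The paper does not cover $B$ by balls: it covers it by the sublevel sets $U_n = A^{-1}\left((-n,n)\right) = \left\{x : |A(x)| < n\right\}$, $n \in \N$, which are open by the topological form of continuity; these form a countable, nested open cover of $B$, compactness extracts a finite subcollection $U_{n_1},\dots,U_{n_K}$, and $T := \max\left\{n_1,\dots,n_K\right\}$ does the job. You instead use pointwise $\epsilon$--$\delta$ continuity with tolerance $1$ to get local boundedness on balls $U_x$, and take $T := \max_k\bigl(|A(x_k)|+1\bigr)$ over a finite subcover. The chief advantage of the paper's cover is that it is countable from the outset, so it fits verbatim the paper's stated definition of compactness (which is phrased only for countable covers $\left\{U_n\right\}_{n=1}^{\infty}$) --- precisely the bookkeeping issue you flagged in your own argument. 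Your Lindel\"of (or sequential-compactness) patch is a correct way to close that gap, but the paper's choice of cover renders it unnecessary. Two further points of comparison: the paper opens with ``for contradiction suppose $A$ is unbounded'' yet never uses that assumption --- its argument, like yours, is in fact direct, ending with ``hence $A$ is bounded'' rather than with a contradiction --- so your write-up avoids a vestigial framing; and the paper's proof contains typographical slips (writing $A\left((-n,n)\right)$ where $A^{-1}\left((-n,n)\right)$ is meant, and $A(x)\in U_{n_k}$ where $x\in U_{n_k}$ is meant) that your version does not inherit.
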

\begin{proof}
For contradiction suppose that $A$ is unbounded. By definition of continuity, the set
$$U_n=A^{-1}\left(\left(-n, n\right)\right)=\left\{x:|A(x)| < n\right\}$$
is open. Then
$$\R = \displaystyle \bigcup_{n=1}^{\infty}\left(-n, n\right)\Rightarrow B\subseteq \displaystyle \bigcup_{n=1}^{\infty}A\left(\left(-n, n\right)\right)=\displaystyle \bigcup_{n=1}^{\infty}U_n.$$
For $x\in B,~ A(x)\in U_n$ for some $n,$ which implies that $|A(x)|< n.$ However $B$ is compact, therefore there is a finite subcollection $\left\{U_{n_k}\right\}_{k=1}^K$ which is still a cover for $k.$ Define
$$T:= \max \left\{n_1, n_2, ..., n_K\right\},$$
and notice that for any $x\in B,$ it must be that
$$A(x)\in U_{n_k} \ \mbox{for some}\ \ k\leq K \Rightarrow |A(x)|< n_k \leq T.$$
Hence, $A$ is bounded.
\end{proof}
\begin{lemma}\label{l6} 
Let $E$ be a uniformly smooth and uniformly convex real Banach space with the dual $E^*.$ Let $p>1,$ $\eta_1, \eta_1 \in (1, \infty)$ and suppose $F: E\rightarrow E^*$ is a continuous $(p, \eta_1)$-strongly monotone mapping such that the range of $(J^E+t_1F)$ is all of $E^*$ and $K: E^*\rightarrow E$ is a continuous $(p, \eta_2)$-strongly monotone mapping such that the range of $(J^{E^*}+t_2K)$ is all of $E$ for some $t_1, t_2>0$ with $D(K) = R(F ) = E^*.$ Let $X := E\times E^*$ with norm ${\|x \|}_X :=  \left( {\| u\|}_{E}^p + {\| v \|}_{E^*}^p  \right)^{\frac{1}{p}} ~ \forall ~ x = \left(u, v\right) \in X$ and define a mapping $A: X\rightarrow X^*$ by
\begin{equation}
Ax = \left(Fu - v, Kv + u \right), \forall ~ x=(u, v)\in X.
\end{equation}
Then $A$ is a continuous $(p, \eta)$-strongly monotone mapping such that the range of $(J^X + tA)$ is all of $X^*$ for some $t > 0,$ where $\eta := \min \left\{{\eta}_1,{\eta}_2\right\}.$
\end{lemma}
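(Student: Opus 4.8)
The plan is to establish the three assertions about $A$ separately: that it is continuous, that it is $(p,\eta)$-strongly monotone, and that $R(J^X+tA)=X^*$ for some $t>0$. The first two are immediate, and the surjectivity of $J^X+tA$ carries all the weight. At the outset I would record, via Lemma \ref{l16}, that $X=E\times E^*$ is itself uniformly smooth and uniformly convex, hence reflexive, strictly convex and smooth, and that its duality map is the single-valued map $J^X(u,v)=(J^Eu,J^{E^*}v)$ into $X^*=E^*\times E$.

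First I would verify continuity: if $x_n=(u_n,v_n)\to(u,v)$ in $X$ then $u_n\to u$ in $E$ and $v_n\to v$ in $E^*$, so by continuity of $F$ and $K$ each coordinate of $Ax_n=(Fu_n-v_n,\,Kv_n+u_n)$ converges to the corresponding coordinate of $A(u,v)$; thus $A$ is continuous and everywhere defined on $X$. The $(p,\eta)$-strong monotonicity, with $\eta=\min\{\eta_1,\eta_2\}$, is exactly Lemma \ref{l7}; in particular $A$ is monotone, and the computation there shows that the coupling $(u,v)\mapsto(-v,u)$ contributes nothing to the monotonicity pairing because its two cross terms cancel.

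For the range condition I would consider $\Phi:=J^X+tA:X\to X^*$ and show it is onto. Since $J^X$ and $A$ are both monotone, $\Phi$ is monotone; since both are continuous, $\Phi$ is continuous (hence hemicontinuous); and $\Phi$ is coercive, because $\langle z,J^Xz\rangle=\|z\|_X^p$ together with $(p,\eta)$-strong monotonicity gives $\langle z,\Phi z\rangle\ge(1+t\eta)\|z\|_X^p-t\|A0\|\,\|z\|_X$, whose quotient by $\|z\|_X$ tends to $\infty$ as $\|z\|_X\to\infty$ since $p>1$. On the reflexive space $X$, a continuous, monotone, coercive operator into $X^*$ is surjective (Browder--Minty), so $R(J^X+tA)=X^*$ for every $t>0$, in particular for some $t>0$. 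The hypotheses $R(J^E+t_1F)=E^*$ and $R(J^{E^*}+t_2K)=E$ enter most naturally through the companion route: by the Rockafellar criterion recalled earlier they make $F$ and $K$ maximal monotone, whence the diagonal operator $(u,v)\mapsto(Fu,Kv)$ is maximal monotone; adjoining the bounded linear monotone skew term $(u,v)\mapsto(-v,u)$, which is continuous and everywhere defined, preserves maximal monotonicity, and Rockafellar's theorem then delivers the same range conclusion.

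The step I expect to be the main obstacle is precisely obtaining the range condition for the coupled operator rather than for $F$ and $K$ separately: the skew term links the two equations $J^Eu+t(Fu-v)=a$ and $J^{E^*}v+t(Kv+u)=b$, so they cannot be solved independently. I would handle this either by the abstract surjectivity argument above, or constructively: solve the first equation for $u$ as the resolvent $u=(J^E+tF)^{-1}(a+tv)$, which is well defined and continuous once $F$ is maximal monotone, substitute into the second, and solve the resulting equation in $v$ alone, now continuous, strongly monotone and coercive on the reflexive space $E^*$, again by the monotone surjectivity theorem. In either form, reflexivity of $X$ from Lemma \ref{l16} and the $(p,\eta)$-strong monotonicity of $A$ from Lemma \ref{l7} are the two ingredients that force surjectivity.
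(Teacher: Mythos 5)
Your proof is correct, but it takes a genuinely different route from the paper's. The paper proves the range condition \emph{constructively}: fixing $0<t_0<1$, it uses the hypotheses $R(J^E+t_1F)=E^*$ and $R(J^{E^*}+t_2K)=E$ to form the resolvents $(J^E+t_0F)^{-1}J^E$ and $(J^{E^*}+t_0K)^{-1}J^{E^*}$, couples them into a self-map $G$ of $X$ built from the target point $h=(h_1,h_2)$, asserts the contraction estimate $\|Gx_1-Gx_2\|\leq t_0\|x_1-x_2\|$, and obtains the solution of $(J^X+t_0A)x=h$ as the unique fixed point via the Banach contraction mapping principle (the case $t\geq 1$ is delegated to Chidume--Shehu); the strong monotonicity is then quoted from Lemma \ref{l7}, exactly as you do. Your Browder--Minty argument replaces the whole fixed-point construction: monotonicity and continuity of $J^X+tA$ are immediate, your coercivity computation $\langle z,\Phi z\rangle\geq(1+t\eta)\|z\|_X^p-t\|A0\|\,\|z\|_X$ is right and gives coercivity precisely because $p>1$, and surjectivity then holds for \emph{every} $t>0$, not just some. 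What your route buys: it is shorter; it sidesteps the most fragile step of the paper's proof (the contraction estimate leans on a ``Lipschitz continuity property of $J^E$'' which holds in Hilbert space but not in a general uniformly smooth space, where $J^E$ is only uniformly continuous on bounded sets and resolvents are nonexpansive only in the firmly-nonexpansive-type sense); and it exposes that the range hypotheses on $F$ and $K$ are actually redundant here, since a continuous, everywhere-defined monotone map is already maximal monotone, which is also why your second (Rockafellar sum theorem) route closes. What the paper's route buys: it is constructive---the solution arises as the limit of a Picard iteration, in keeping with the algorithmic spirit of the paper---and it stays within the paper's stated toolkit, whereas you must import the Browder--Minty surjectivity theorem (or Rockafellar's maximality and sum theorems) from outside.
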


\begin{proof}
 We show that $R(J^X + t A)=X^*$ for some $t > 0.$ Without loss of generality, let ${t}_0$ be such that $0 < {t}_0 < 1.$
Since $F$ is such that $R(J^E+{t}_0F)=E^*$ and by the strict convexity of $E,$ we obtain for every $u \in E,$ there exists a unique $u_{{t}_0} \in E$ such that
$$J^Eu = J^Eu_{{t}_0} + {t}_0 Fu_{{t}_0}.$$
Define $J^E_{{t}_0} u := u_{{t}_0},$ in other words, define a single-valued mapping $J^{E}_{{t}_0} : E\rightarrow D(F)$ by $J^{E}_{{t}_0} = (J^E + {t}_0 F)^{-1}J^E,$ where $D(F)$ is the domain of $F.$ Such a $J^{E}_{{t}_0} $ is called the resolvent of $F.$ Similarly, since $K$ is such that $R(J^{E^*}+{t}_0K)=E$ and by strict convexity of $E^*,$ define the resolvent $J^{E^*}_{{t}_0}: E^* \rightarrow D(K)$ by  $J^{E^*}_{{t}_0} = (J^{E^*} + {t}_0 K)^{-1}J^{E^*},$ where $D(K)$ is the domain of $K.$ It is known that $(J^E + {t}_0 F)$ and $(J^{E^*} + {t}_0 K)$ are bijections (see e.g, Chuang \cite{r14}). It can be easily verified that if $E$ is a smooth, strictly convex and reflexive Banach space and $F: E \rightarrow E^*$ is a monotone mapping with $R(J^E+t F)=E^*,$ then for each $t >0,$ the resolvent $J^E_{t}$ of $F$ defined by 
$$J_{t}^Eu=\left\{z\in E:J^Eu = J^Ez+ t Fz \right\}=\left\{\left(J^E+t F\right)^{-1}J^Eu\right\}$$
for all $u\in E$ is a firmly nonexpansive type mapping. Indeed, for each $u, v\in E$, $t >0$ and for every $J^E_{t}u,  J^E_{t}v \in E,~~~ \frac{J^Eu-J^E(J^E_{t}u)}{t}, \frac{J^Ev-J^E(J^E_{t}v)}{t} \in F$ and by the monotonicity of $F$, we have
$$\left\langle J^E_{t}x-J^E_{t}v, \frac{J^Eu-J^E(J^E_{t}u)}{t}-\frac{J^Ev-J^E(J^E_{t}v)}{t}\right\rangle \geq0.$$
Such that
$$\left\langle J^E_{t}u-J^E_{t}v, J^E(J^E_{t}x)-J^E(J^E_{t}v) \right\rangle \leq  \left\langle J^E_{t}x-J^E_{t}v,J^Eu-J^Ev \right\rangle.$$
Therefore, for $h :=(h_1, h_2) \in X$, define $G: X \rightarrow X$ by $$Gx = \left(J^{E^*}_{{t}_0}(h_2+ {t}_0 v), J^{E}_{{t}_0}(h_1 - {t}_0 u)\right), \forall ~~ x = (u, v)\in X.$$
By the Lipschitz continuity property of $J^{E},$ we have
$$\| Gx_1 - Gx_2 \| \leq {t}_0 \| x_1 -x_2 \|~~~~ \forall~~~~ x_1, x_2 \in X.$$
Therefore $G$ is a contraction. So by the Banach contraction mapping principle, $G$ has a unique fixed point $x^*:=(u^*, v^*)\in X$, that is $Gx^* = x^*$ or equivalently $u^* = J^{*K}_{{t}_0} (h_2+ {t}_0 v^*)$ and  $v^* = (J^{F}_{{t}_0} (h_1 - {t}_0 u^*).$ These imply $(J^X + {t}_0 A)x=h.$ Therefore, $R(J^X + {t}_0 A)=X^*.$ For $t\geq 1,$ it is obtained by similar analysis that $R(J^X + {t}A)=X^*$ (see e.g., Chidume and Shehu \cite{r6}). Recall that $A$ is $(p, \eta)$-strongly monotone by Lemma \ref{l7}. Then $A$ is an $(p, \eta)$-strongly monotone mapping with $R(J^X + t A)=X^*$ for some $t > 0.$ 
\end{proof}

\par Next is to give our main theorem.
\begin{theorem}\label{t5}
Let $E$ be a uniformly smooth and uniformly convex real Banach space with the dual $E^*.$  Let $p>1,$ $\eta_1, \eta_1 \in (1, \infty)$ and suppose $F: E\rightarrow E^*$ is a continuous $(p, \eta_1)$-strongly monotone mapping such that the range of $(J^E+t_1F)$ is all of $E^*$ and $K: E^*\rightarrow E$ is a continuous $(p, \eta_2)$-strongly monotone mapping such that the range of $(J^{E^*}+t_2K)$ is all of $E$ for some $t_1, t_2>0$ with $D(K) = R(F ) = E^*.$ For arbitrary $u_1 \in E$ and $v_1\in E^*$, let $\left\{u_n\right\}$ and $\left\{v_n\right\}$ be the sequences defined iteratively by
\begin{equation}\label{am1}
u_{n+1} = J^{E^*}\left(J^Eu_n - {\lambda}_n\left(Fu_n-v_n+{\theta}_n(J^Eu_n-J^Eu_1)\right)\right), n \in \N,
\end{equation}
\begin{equation}\label{am2}
v_{n+1} = J^E\left(J^{E^*}v_n - {\lambda}_n\left(Kv_n+u_n+{\theta}_n(J^{E^*}v_n-J^{E^*}v_1)\right)\right), n \in \N,
\end{equation}
where $J^E:E\rightarrow E^*$ is the generalized duality mapping with the inverse, $J^{E^*}:E^*\rightarrow E$ and the real sequences $\left\{ {\lambda}_n\right\}^{\infty}_{n=1}\subset (0,1)$ and $\left\{ {\theta}_n\right\}^{\infty}_{n=1}$ in $(0,\frac{1}{2})$ are such that,
\begin{itemize}
	\item[(i)] $\displaystyle \lim_{n\rightarrow \infty} {\theta}_n =0;$ 
	\item[(ii)]$ \displaystyle\sum_{n=1}^{\infty} {\lambda}_n{\theta}_n=\infty$; ${\lambda}_n=o({\theta}_n)$;
	\item[(iii)]$\displaystyle \lim_{n\rightarrow \infty}\left(({\theta}_{n-1}/{\theta}_n)-1\right)/{{\lambda}_n{\theta}_n}=0.$
\end{itemize}	
 Suppose that $0 = u + KFu$ has a solution in $E.$ Then the sequence $\left\{u_n \right\}$ converges strongly to the solution of $u+KFu=0.$
\end{theorem}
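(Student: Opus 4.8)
The plan is to recast the coupled system \eqref{am1}--\eqref{am2} as a single Tikhonov--Halpern type iteration in $X=E\times E^*$ for the operator $A$ of Lemma \ref{l6}, and then run a regularization-path argument driven by the sequence lemma \ref{l11}. First I would observe that $u\in E$ solves $u+KFu=0$ if and only if $x^{*}:=(u,Fu)$ is a zero of $A$; indeed $Ax^{*}=(Fu-Fu,\,KFu+u)=(0,0)$. Since $A$ is $(p,\eta)$-strongly monotone by Lemma \ref{l7}, such a zero is unique, and it exists by hypothesis. Writing $x_n:=(u_n,v_n)$ and using $J^{X}(u,v)=(J^{E}u,J^{E^{*}}v)$ together with $J^{X^{*}}=(J^{X})^{-1}$, the two lines \eqref{am1}--\eqref{am2} are exactly the single recursion
$$x_{n+1}=J^{X^{*}}\bigl(J^{X}x_n-\lambda_n w_n\bigr),\qquad w_n:=Ax_n+\theta_n\bigl(J^{X}x_n-J^{X}x_1\bigr),\ n\in\N,$$
so it suffices to prove $x_n\to x^{*}$ in $X$, which forces $u_n\to u$ in $E$. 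A preliminary induction, using that $J^{X}$ and $J^{X^{*}}$ are uniformly continuous on bounded sets and that $\theta_n\to 0$, shows that $\{x_n\}$ (hence $\{w_n\}$) is bounded.

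Next I would introduce the regularization path: for each $n$ let $y_n\in X$ be the unique solution of $Ay_n+\theta_n(J^{X}y_n-J^{X}x_1)=0$, which is well defined because $A+\theta_nJ^{X}$ is strongly monotone and $R(J^{X}+tA)=X^{*}$ for every $t>0$ by Lemma \ref{l6}. Pairing this identity with $y_n-x^{*}$, using $Ax^{*}=0$, the $(p,\eta)$-strong monotonicity of $A$ and the Cauchy--Schwartz inequality gives $\eta\|y_n-x^{*}\|^{p}\le\theta_n\|y_n-x^{*}\|\,\|J^{X}x_1-J^{X}x^{*}\|$, whence $\|y_n-x^{*}\|^{p-1}\le\theta_n\|J^{X}x_1-J^{X}x^{*}\|/\eta$; by condition (i) this shows $\{y_n\}$ is bounded and $y_n\to x^{*}$. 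Subtracting the defining identities for $y_n$ and $y_{n-1}$, pairing with $y_n-y_{n-1}$ and again invoking strong monotonicity and the monotonicity of $J^{X}$ yields a bound of the form $\|y_n-y_{n-1}\|^{p-1}\le (M/\eta)\,|\theta_{n-1}-\theta_n|$ with $M:=\sup_n\|J^{X}y_{n-1}-J^{X}x_1\|<\infty$; after writing $|\theta_{n-1}-\theta_n|=\theta_n|(\theta_{n-1}/\theta_n)-1|$ and dividing by $\lambda_n\theta_n$, condition (iii) makes the path drift negligible.

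The core of the argument is a one-step estimate for $a_n:=\wedge_p(y_{n-1},x_n)$. Applying the three-point inequality \eqref{e31} of Lemma \ref{l31} in each coordinate with reference point $y_n$, and using $J^{X}x_n-J^{X}x_{n+1}=\lambda_n w_n$, I obtain
$$\wedge_p(y_n,x_{n+1})\le \wedge_p(y_n,x_n)-p\lambda_n\langle x_n-y_n,w_n\rangle-p\lambda_n\langle x_{n+1}-x_n,w_n\rangle.$$
Expanding $w_n$, inserting $Ay_n=-\theta_n(J^{X}y_n-J^{X}x_1)$ and using the plain monotonicity of $A$ collapses the middle term into $-p\lambda_n\theta_n\langle x_n-y_n,J^{X}x_n-J^{X}y_n\rangle\le -p\lambda_n\theta_n\,g(\|x_n-y_n\|)$ by Lemma \ref{t3}; the last term is an error bounded, via $\|x_{n+1}-x_n\|\le\psi(\lambda_n\|w_n\|)$ (uniform continuity of $J^{X^{*}}$ on the bounded set containing the iterates) and the boundedness of $\{w_n\}$, by $p\lambda_n\cdot o(1)$, which together with $\lambda_n=o(\theta_n)$ is of order $o(\lambda_n\theta_n)$. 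Finally the path-drift estimate of the previous step lets me replace $\wedge_p(y_n,x_n)$ by $a_n$ up to a term controlled by condition (iii). Converting the strict-monotonicity gain $g(\|x_n-y_n\|)$ into a positive multiple of $a_n$ through the two-sided comparison of $\wedge_p$ with the norm on bounded sets (inequality \eqref{e11}, Lemmas \ref{l21} and \ref{l15}), and noting that $\theta_n<\tfrac12$ forces $\sum\lambda_n\theta_n=\infty$, the resulting inequality takes the form $a_{n+1}\le(1-\alpha_n)a_n+\alpha_n\sigma_n+\gamma_n$ with $\alpha_n\asymp\lambda_n\theta_n$, $\sum\alpha_n=\infty$, $\sigma_n\to0$ and $\sum\gamma_n<\infty$.

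Lemma \ref{l11} then gives $a_n=\wedge_p(y_{n-1},x_n)\to0$, so by Lemma \ref{l15} (applied to $\phi_p$) $\|x_n-y_{n-1}\|\to0$; combined with $y_n\to x^{*}$ this yields $x_n\to x^{*}$ in $X$, and reading off the first coordinate gives $u_n\to u$, the solution of $u+KFu=0$. The main obstacle I anticipate is the third paragraph: packaging the three competing infinitesimals --- the contraction gain of order $\lambda_n\theta_n$ from the regularized monotonicity, the step remainder $\|x_{n+1}-x_n\|$ (handled by $\lambda_n=o(\theta_n)$), and the path drift $\|y_n-y_{n-1}\|$ (handled by condition (iii)) --- into the precise $(1-\alpha_n)a_n+\alpha_n\sigma_n+\gamma_n$ form, and in particular the conversion between the Bregman-type functional $\wedge_p$ and the modulus $g$ of Lemma \ref{t3}, since uniform convexity alone does not give a clean power-type comparison and one must argue on bounded sets.
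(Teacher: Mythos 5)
Your proposal follows essentially the same route as the paper's own proof: the paper likewise rewrites (\ref{am1})--(\ref{am2}) as a single regularized iteration for $A(u,v)=(Fu-v,\,Kv+u)$ on $X=E\times E^*$, proves boundedness of $\{x_n\}$ by induction, introduces the same regularization path $(y_n,z_n)=\left(J^X+\tfrac{1}{\theta_n}A\right)^{-1}J^Xx_1$, combines the three-point inequality of Lemma \ref{l31} with Lemma \ref{t3}, the cross-term cancellation and condition (iii) to reach a recursion of the form $\wedge_p(w_n,x_{n+1})\le(1-\lambda_n\theta_n)\wedge_p(w_{n-1},x_n)+C\left(\tfrac{\theta_{n-1}}{\theta_n}-1\right)+p\lambda_nMQ$, and closes with Lemmas \ref{l11} and \ref{l15}. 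The only genuine deviations are minor: you prove $y_n\to x^*$ directly from the $(p,\eta)$-strong monotonicity of $A$ (the paper instead cites Aoyama et al. \cite{Aoyamaetal1} and Reich \cite{Reich1}), you bound the drift by $\|y_n-y_{n-1}\|$ where the paper bounds $\|J^Xy_{n-1}-J^Xy_n\|$ (the latter being what the three-point estimate actually consumes, so your version needs one extra appeal to uniform continuity of $J^X$), and the step you flag as the main obstacle --- converting the $O(\lambda_n)$ discretization error into $o(\lambda_n\theta_n)$ --- is exactly the point the paper itself leaves unjustified, since its remainder $p\lambda_nMQ$ with constant $MQ$ is passed to Lemma \ref{l11} although it is neither $o(\lambda_n\theta_n)$ nor summable.
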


\begin{proof}
Let $X := E\times E^*$ with norm ${\|x \|}_X^p :=   {\| u\|}_{E}^p + {\| v \|}_{E^*}^p   ~ \forall ~ x = \left(u, v\right) \in X.$ Define the sequence $\left\{x_n\right\}$ in $X$ by $x_n := (u_n, v_n).$ Let $u^*\in E$ be a solution of $u+KFu = 0.$ Observe that setting $v^*:=Fu^*$ and $x^*:=(u^*, v^*),$ we have that $u^* = -Kv^*.$

 We divide the proof into two parts.\\
{\bf Part 1:} We prove that $\left\{x_n \right\}$ is bounded. Let $p>1$ with $\frac{1}{p}+\frac{1}{q}=1$ and $u^*\in E$ be a solution of the equation $0 = u + KFu.$ It suffices to show that ${\phi}_p(x^*, x_n)\leq r, ~\forall ~n\in \N.$ The proof is by induction. Let $r > 0$ be sufficiently large such that:
\begin{equation}\label{e22}
r\geq \max \left\{ {\phi}_p( x^*, x_1), 4MQ, \frac{4p}{q}{\| x\|}^q \right\},
\end{equation}
where $M>0$ and $Q>0$ are arbitrary but fixed. For $n =1,$ we have by construction that ${\phi}_p( x^*, x_1)\leq r$ for real $p>1.$ Assume that ${\phi}_p( x^*, x_n)\leq r$ for some $n\geq 1.$ From inequality (\ref{e11}), we have $\|x_n \| \leq r^{\frac{1}{p}} + \|x^* \|$ for real $p>1.$ The next task is to show that  ${\phi}_p( x^*, x_{n+1})\leq r.$ Let $B:=\left\{z\in E: {\phi}_p(x^*, z)\leq r \right\}.$ $J^E$ is known to be uniformly continuous on bounded subsets of $E$ and $F$ is bounded on $B$ (Lemma \ref{pdf1}). Define
$$M_1 :=\sup\left\{ {\|Fu_n-v_n+{\theta}_n(J^Eu_n-J^Eu_1)\|}: {\theta}_n\in(0,1), u_n\in B \right\}+1.$$
Let ${\psi}_1$ denotes the modulus of continuity of $J^{E^*}$ and observe that $J^{E^*}(J^Eu_n)=u_n.$ Then
\begin{eqnarray}\label{e30}
\|u_n-u_{n+1}\|&=&\|x_n-  J^{E^*}( J^Eu_n - {\lambda}_n\left(Fu_n-v_n+{\theta}_n(J^Eu_n-J^Eu_1)\right)) \|\nonumber\\
&=&\|J^{E^*}(J^Ex_n)-  J^{E^*}(J^Ex_n - {\lambda}_n\left(Fu_n-v_n+{\theta}_n(J^Eu_n-J^Eu_1)\right)) \| \nonumber\\
&\leq&{\psi}_1\left( |{\lambda}_n|\|Fu_n-v_n+{\theta}_n(J^Eu_n-J^Eu_1)\| \right)\nonumber\\
&\leq&{\psi}_1\left( |{\lambda}_n|M_1\right)\nonumber\\
&\leq& {\psi}_1 \left( \sup\left\{ |{\lambda}_n|M_1: {\lambda}_n\in(0,1)\right\} \right).
\end{eqnarray}
The $\sup\left\{ |{\lambda}_n|M_1\right\}$ exists and it is a real number different from infinity due to the boundedness of $F$ and uniform continuity of $J^E$ on bounded subsets of $E.$ Let $Q_1=:{\psi}_1 \left(\sup \left\{ |{\lambda}_n|M_1\right\}\right).$ Similarly, let ${\psi}_2$ be the modulus of continuity of $J^E: E\rightarrow E^*$ on bounded subsets of $E$ and observe that $J^E(J^{E^*}v_n)=v_n.$ Define
$$M_2 :=\sup\left\{ {\|Kv_n+u_n+{\theta}_n(J^{E^*}v_n-J^{E^*}v_1)\|}: {\theta}_n\in(0,1),  v_n\in B  \right\}+1,$$
then
\begin{eqnarray}\label{e32}
\|v_n-v_{n+1}\|&\leq&{\psi}_2\left( \sup\left\{ |{\lambda}_n|M_2: {\lambda}_n\in(0,1)\right\} \right).
\end{eqnarray}
Let $Q_2=:{\psi}_2 \left(\sup \left\{ |{\lambda}_n|M_2\right\}\right)$ and define $M:=M_1+M_2$ and $Q:=Q_1+Q_2.$
Let $\eta:=\min\left\{{\eta}_1, {\eta}_2\right\},$ by applying Lemma \ref{l20} with $y^* := {\lambda}_n\left(Fu_n-v_n+{\theta}_n(J^Eu_n-J^Eu_1)\right)$ and by using the definition of $u_{n+1}$, we compute as follows,
  \begin{eqnarray*}
{\phi}_p(u^*, u_{n+1})
 & = & {\phi}_p\left(u^*, J^{E^*}\left(J^Eu_n - y^*\right)\right)\\
 & = & V_p\left(u^*, J^Eu_n - {\lambda}_n\left(Fu_n-v_n+{\theta}_n(J^Eu_n-J^Eu_1)\right)\right)\\
 & \leq & V_p(u^*, J^Eu_n )\\
 &&-p{\lambda}_n\left\langle J^{E^*}( J^Eu_n - y^*)-u^*, Fu_n-v_n+{\theta}_n(J^Eu_n-J^Eu_1)\right\rangle \\
 &= & {\phi}_p(u^*, u_n )-p{\lambda}_n \left\langle u_n-u^*,Fu_n-v_n+{\theta}_n(J^Eu_n-J^Eu_1)\right\rangle\\
 & &-p{\lambda}_n\left\langle J^{E^*}( J^Eu_n - y^*)-u_n, Fu_n-v_n+{\theta}_n(J^Eu_n-J^Eu_1)\right\rangle.
 \end{eqnarray*}
 By Schwartz inequality and by applying inequality (\ref{e30}), we obtain
\begin{eqnarray*}
{\phi}_p(u^*, u_{n+1})
& \leq & {\phi}_p(u^*, u_n )-p{\lambda}_n \left\langle u_n-u^*,Fu_n-v_n+{\theta}_n(J^Eu_n-J^Eu_1)\right\rangle + p{\lambda}_nM_1Q_1\\
& = &{\phi}_p(u^*, u_n )-p{\lambda}_n \left\langle u_n-u^*,Fu_n-Fu^*\right\rangle  \ \mbox{(since $u^*\in N(F)$)} \\
& &-p{\lambda}_n \left\langle u_n-u^*,v^*-v_n\right\rangle-p{\lambda}_n{\theta}_n \left\langle u_n-u^*, J^Eu_n-J^Eu_1\right\rangle +p{\lambda}_nM_1Q_1.
\end{eqnarray*}
By Lemma \ref{l31}, $p\left\langle u_n-u^*, J^Eu_1-J^Eu_n\right\rangle \leq {\phi}_p(u^*, u_1 ) - {\phi}_p(u^*, u_n ).$ Consequently,\\ $p\left\langle u_n-u^*, J^Eu_1-J^Eu_n\right\rangle \leq {\phi}_p(u^*, u_1 ).$  Therefore, using $(p, {\eta}_1)$-strongly monotonicity property of $F,$ we have,
\begin{eqnarray}\label{e10}
{\phi}_p(u^*, u_{n+1})
 &\leq&  {\phi}_p(u^*, u_n )-p{\eta}_1{\lambda}_n{\|u_n -u^* \|}^p-p{\lambda}_n \left\langle u_n-u^*,v^*-v_n\right\rangle \nonumber\\ 
  &&+p{\lambda}_n{\theta}_n p\left\langle u_n-u^*, Ju_1-Ju_n\right\rangle +p{\lambda}_nM_1Q_1 \nonumber\\
 &\leq&  {\phi}_p(u^*, u_n )-p{\lambda}_n{\|u_n -u^* \|}^p-p{\lambda}_n \left\langle u_n-u^*,v^*-v_n\right\rangle \nonumber\\
 &&+p{\lambda}_n{\theta}_n {\phi}_p(u^*, u_1 )+p{\lambda}_nM_1Q_1 \nonumber \\
 &\leq&  {\phi}_p(u^*, u_n )-p{\lambda}_n\left({\phi}_p (u^*, u_n)- \frac{p}{q}{\| u^*\|}^q \right)-p{\lambda}_n \left\langle u_n-u^*,v^*-v_n\right\rangle \nonumber\\
 &&+p{\lambda}_n{\theta}_n {\phi}_p(u^*, u_1 ) +p{\lambda}_nM_1Q_1\nonumber\\
 &=&  \left(1-p{\lambda}_n\right){\phi}_p (u^*, u_n)+p{\lambda}_n\frac{p}{q}{\| u^*\|}^q-p{\lambda}_n \left\langle u_n-u^*,v^*-v_n\right\rangle \nonumber\\
 &&+p{\lambda}_n{\theta}_n {\phi}_p(u^*, u_1 ) +p{\lambda}_nM_1Q_1.
 \end{eqnarray}
 
 Similarly,
  \begin{eqnarray*}
{\phi}_p(v^*, v_{n+1})
 & = & {\phi}_p\left(v^*, J^E\left(J^{E^*}v_n - {\lambda}_n\left(Kv_n+u_n+{\theta}_n(J^{E^*}v_n-J^{E^*}v_1)\right)\right)\right)\\
 & = & V_p\left(v^*, J^{E^*}v_n - {\lambda}_n\left(Kv_n+u_n+{\theta}_n(J^{E^*}v_n-J^{E^*}v_1)\right)\right)\\
 & \leq & V_p(v^*, J^{E^*}v_n )-p{\lambda}_n\left\langle J^E( J^{E^*}v_n - {\lambda}_n\left(Kv_n+u_n+{\theta}_n(J^{E^*}v_n-J^{E^*}v_1)\right))-v^*, y^*\right\rangle \\
 & & (\mbox{by Lemma ~(\ref{l20}) where} \ y^*{ =} Kv_n+u_n+{\theta}_n(J^{E^*}v_n-J^{E^*}v_1) \\
 &= & {\phi}_p(v^*, v_n )-p{\lambda}_n \left\langle v_n-v^*, y^*\right\rangle\\
 &&-p{\lambda}_n\left\langle J^E( J^{E^*}v_n - {\lambda}_n\left(Kv_n+u_n+{\theta}_n(J^{E^*}v_n-J^{E^*}v_1)\right))-v_n, y^*\right\rangle.
 \end{eqnarray*}
  
 By Schwartz inequality and by applying inequality (\ref{e32}), we obtain
\begin{eqnarray*}
{\phi}_p(v^*, v_{n+1})
& \leq & {\phi}_p(v^*, v_n )-p{\lambda}_n \left\langle v_n-v^*,Kv_n+u_n+{\theta}_n(J^{E^*}v_n-J^{E^*}v_1)\right\rangle + p{\lambda}_nM_2Q_2\\
& = &{\phi}_p(v^*, v_n )-p{\lambda}_n \left\langle v_n-v^*, Kv_n-Kv^*\right\rangle\ \mbox{(since $v^*\in N(K)$)}\\
& &-p{\lambda}_n \left\langle v_n-v^*,u_n-u^*\right\rangle-p{\lambda}_n{\theta}_n \left\langle v_n-v^*, J^{E^*}v_n-J^{E^*}v_1\right\rangle +p{\lambda}_nM_2Q_2.
\end{eqnarray*}
By Lemma \ref{l31}, $p\left\langle v_n-v^*, J^{E^*}v_1-J^{E^*}v_n\right\rangle \leq {\phi}_p(v^*, v_1)- {\phi}_p(v^*, v_n ).$ Consequently,\\ $p\left\langle v_n-v^*, J^{E^*}v_1-J^{E^*}v_n\right\rangle \leq {\phi}_p(v^*, v_1).$ Therefore, using $(p, {\eta}_2)$-strongly monotonicity property of $K,$ we have,
\begin{eqnarray}\label{e101}
{\phi}_p(v^*, v_{n+1})
 &\leq&  {\phi}_p(v^*, v_n )-p{\eta}_2{\lambda}_n{\|v_n -v^* \|}^p-p{\lambda}_n \left\langle v_n-v^*,u_n-u^*\right\rangle \nonumber\\
  &&+p{\lambda}_n{\theta}_n \left\langle v_n-v^*, J^{-1}v_1-J^{-1}v_n\right\rangle +p{\lambda}_nM_2Q_2 \nonumber\\
 &\leq&  {\phi}_p(v^*, v_n )-p{\eta}_2{\lambda}_n{\|v_n -v^* \|}^p+p{\lambda}_n \left\langle u_n-u^*, v_n-v^*\right\rangle \nonumber\\
 & &+p{\lambda}_n{\theta}_n {\phi}_p(v^*, v_1) +p{\lambda}_nM_2Q_2 \nonumber \\
 &\leq&  {\phi}_p(v^*, v_n )-p{\lambda}_n\left({\phi}_p (v^*, v_n)- \frac{p}{q}{\| v^*\|}^q \right)+p{\lambda}_n \left\langle u_n-u^*, v_n-v^*\right\rangle \nonumber\\
 &&+p{\lambda}_n{\theta}_n{\phi}_p(v^*, v_1)+p{\lambda}_nM_2Q_2\nonumber\\
 &=&\left(1-p{\lambda}_n\right){\phi}_p (v^*, v_n)+ p{\lambda}_n\frac{p}{q}{\| v^*\|}^q +p{\lambda}_n \left\langle u_n-u^*, v_n-v^*\right\rangle \nonumber\\
 &&+p{\lambda}_n{\theta}_n{\phi}_p(v^*, v_1)+p{\lambda}_nM_2Q_2.
 \end{eqnarray}
 
 Adding (\ref{e10}) and (\ref{e101}) gives 
\begin{eqnarray*}\label{e104}
{\wedge}_p(x^*, x_{n+1})& \leq & \left(1-p{\lambda}_n\right){\wedge}_p(x^*, x_n)+ p{\lambda}_n\frac{p}{q}{\| x^*\|}^q +p{\lambda}_n{\theta}_n{\wedge}_p(x^*, x_1)+p{\lambda}_nMQ\\
&\leq&\left(1-p{\lambda}_n\right)r+ p{\lambda}_n\frac{r}{4}+p{\lambda}_n\frac{r}{2}+p{\lambda}_n\frac{r}{4}\\
&=&\left(1-p{\lambda}_n+p\frac{{\lambda}_n}{4}+p\frac{{\lambda}_n}{2}+p\frac{{\lambda}_n}{4}\right)r=r.
\end{eqnarray*}
Hence, ${\wedge}_p(x^*, x_{n+1}) \leq r.$ By induction, ${\wedge}_p(x^*, x_n) \leq r  ~~ \forall  ~~ n\in \N.$ Thus, from inequality (\ref{e11}), $\left\{x_n\right\}$ is bounded.

\vskip 0.5 truecm

{\bf Part 2:} We now show that $\left\{x_n \right\}$ converges strongly to a solution of $Ax=0.$ Observe that $u^*$ in $E$ is a solution of $u+KFu=0$ if and only if $x^*=(u^*, v^*)$ is a solution of $Ax=0$ in $X$ for $v^*=Fu^* \in E^*,$ since $Ax = \left(Fu - v, Kv + u \right)$ with $x:=\left(u, v\right).$  This implies that 
\begin{eqnarray*}
Fu^*-v^*=0,\\
Kv^*+u^*=0.
\end{eqnarray*}
We recall that $(p, \eta)$-strongly monotone implies monotone and it is given that the range $(J^B_p+tA)$ is all of $X^*$ for all $t>0.$ By Kohsaka and Takahashi \cite{kt}, since $X$ is a reflexive smooth strictly convex space, we obtain for every $t>0$ and $x\in X,$ there exists a unique $x_t\in X$ such that
\begin{equation}
J^Xx = J^Xx_t+tAx_t.
\end{equation} 
Define $J^X_tx:=x_t,$ in other words, define a single-valued mapping $J^X_t : E\rightarrow D(A)$ by $J^X_t=(J^X+tA)^{-1}J^X.$ Such a $J^X_t$ is called the resolvent of $A.$ 
Setting $t:=\frac{1}{{\theta}_n}$ and by the result of Aoyama et al. \cite{Aoyamaetal1} and Reich \cite{Reich1}, for some $\left(u_1,v_1\right):=x_1\in  X,$ there exists in $X$ a unique sequence
\begin{equation}
\left(y_n, z_n\right):=\left(J^X+\frac{1}{{\theta}_n}A\right)^{-1}J^X\left(u_1,v_1\right)
\end{equation} 
with $\left(y_n, z_n\right)\rightarrow \left(u^*,v^*\right):=x^*\in A^{-1}(0),$ where $A\left(y_n, z_n\right) = \left(Fy_n - z_n, Kz_n + y_n \right).$ It can be obtained that
\begin{equation}\label{e24}
{\theta}_n\left(J^Ey_n-J^Eu_1\right)+Fy_n-z_n=0,
\end{equation} 
\begin{equation}\label{e241}
{\theta}_n\left(J^{E^*}z_n-J^{E^*}v_1\right)+Kz_n+y_n=0,
\end{equation} 
where $\left\{y_n\right\}$ and  $\left\{z_n\right\}$ are known to be bounded since they are convergent sequences. Therefore, it is required to show that $u_n\rightarrow y_n$ and $v_n\rightarrow z_n$ as $n\rightarrow \infty.$  Following the same arguments as in part 1, we obtain,
 \begin{equation}\label{e25}
{\phi}_p(y_n, u_{n+1}) \leq  {\phi}_p(y_n, u_n )-p{\lambda}_n \left\langle u_n-y_n, Fu_n-v_n+{\theta}_n(J^Eu_n-J^Eu_1)\right\rangle+p{\lambda}_nM_1Q_1 \ \mbox{and}
\end{equation}
 \begin{equation}\label{e251}
{\phi}_p(z_n, v_{n+1}) \leq  {\phi}_p(z_n, v_n )-p{\lambda}_n \left\langle v_n-z_n, Kv_n+u_n+{\theta}_n(J^{E^*}v_n-J^{E^*}v_1)\right\rangle+p{\lambda}_nM_2Q_2.
\end{equation}
By the $(p, \eta_1)$-strong monotonicity of $F$ and using Lemma \ref{t3} and Eq. (\ref{e24}), we obtain,
\begin{eqnarray*}
&&\left\langle u_n-y_n, Fu_n-v_n+{\theta}_n(J^Eu_n-J^Eu_1)\right\rangle\\
&= & \left\langle u_n-y_n, Fu_n-v_n+{\theta}_n(J^Eu_n-J^Ey_n+J^Ey_n-J^Eu_1)\right\rangle\\
&= & {\theta}_n\left\langle u_n-y_n,J^Eu_n-J^Ey_n\right\rangle+ \left\langle u_n-y_n, Fu_n-v_n+{\theta}_n(J^Ey_n-J^Eu_1)\right\rangle\\
& = &{\theta}_n\left\langle u_n-y_n,J^Eu_n-J^Ey_n\right\rangle+\left\langle u_n-y_n, Fu_n-v_n-(Fy_n-z_n) \right\rangle\\
&\geq  &{\theta}_ng(\|u_n-y_n \|)+\left\langle u_n-y_n, Fu_n-Fy_n\right\rangle+\left\langle u_n-y_n, z_n-v_n \right\rangle\\
&\geq & {\theta}_ng(\|u_n-y_n \|) +\eta_1{\|u_n-y_n \|}^p+\left\langle u_n-y_n, z_n-v_n \right\rangle\ \mbox{(since $F$ is $(p, \eta_1)$-strongly monotone and by Lemma \ref{t3})} \\
&\geq & \frac{1}{p}{\theta}_n {\phi}_p(y_n, u_n )+\left\langle u_n-y_n, z_n-v_n \right\rangle.
\end{eqnarray*}
Therefore, the inequality (\ref{e25}) becomes
\begin{equation}\label{e29}
{\phi}_p(y_n, u_{n+1}) \leq  (1-{\lambda}_n{\theta}_n){\phi}_p(y_n, u_n )-p{\lambda}_n\left\langle u_n-y_n, z_n-v_n \right\rangle +p{\lambda}_nM_1Q_1.
\end{equation}
By the $(p, \eta_2)$-strong monotonicity of $K$ and using Lemma \ref{t3} and Eq. (\ref{e241}), we obtain,
\begin{eqnarray*}
&&\left\langle v_n-z_n, Kv_n+u_n+{\theta}_n(J^{E^*}v_n-J^{E^*}v_1)\right\rangle\\
&= & \left\langle v_n-z_n, Kv_n+u_n+{\theta}_n(J^{E^*}v_n-J^{E^*}z_n+J^{E^*}z_n-J^{E^*}v_1)\right\rangle\\
&= & {\theta}_n\left\langle v_n-z_n,J^{E^*}v_n-J^{E^*}z_n\right\rangle+ \left\langle v_n-z_n, Kv_n+u_n+{\theta}_n(J^{E^*}z_n-J^{E^*}v_1)\right\rangle\\
& = &{\theta}_n\left\langle v_n-z_n,J^{E^*}v_n-J^{E^*}z_n\right\rangle+\left\langle v_n-z_n, Kv_n+u_n-(Kz_n+y_n) \right\rangle\\
&\geq  &{\theta}_ng(\|v_n-z_n \|)+\left\langle v_n-z_n, Kv_n-Kz_n\right\rangle+\left\langle v_n-z_n, u_n-y_n \right\rangle\\
&\geq & {\theta}_ng(\|v_n-z_n \|) +\eta_2{\|v_n-z_n \|}^p+\left\langle v_n-z_n, u_n-y_n \right\rangle\ \mbox{(since $K$ is $(p, \eta_2)$-strongly monotone and by Lemma \ref{t3})} \\
&\geq & \frac{1}{p}{\theta}_n {\phi}_p(z_n, v_n )+\left\langle v_n-z_n, u_n-y_n \right\rangle.
\end{eqnarray*}
Therefore, the inequality (\ref{e251}) becomes
\begin{equation}\label{e291}
{\phi}_p(z_n, v_{n+1}) \leq  (1-{\lambda}_n{\theta}_n){\phi}_p(z_n, v_n )-p{\lambda}_n\left\langle v_n-z_n, u_n-y_n \right\rangle +p{\lambda}_nM_2Q_2.
\end{equation}
Observe that by Lemma \ref{l31}, we have
\begin{eqnarray}\label{e26}
{\phi}_p(y_n, u_n )&\leq &  {\phi}_p(y_{n-1}, u_n )-p\left\langle y_n-u_n, J^Ey_{n-1}-J^Ey_n\right\rangle \nonumber \\ 
 &=& {\phi}_p(y_{n-1}, u_n )+p\left\langle u_n-y_n, J^Ey_{n-1}-J^Ey_n \right\rangle \nonumber \\
& \leq& {\phi}_p(y_{n-1}, u_n )+\|J^Ey_{n-1}-J^Ey_n\|\|u_n-y_n\|,
\end{eqnarray}
and similarly
\begin{eqnarray}\label{e261}
{\phi}_p(z_n, v_n )&\leq &  {\phi}_p(z_{n-1}, v_n )-p\left\langle z_n-v_n, J^{E^*}z_{n-1}-J^{E^*}z_n\right\rangle \nonumber \\ 
 &=& {\phi}_p(z_{n-1}, v_n )+p\left\langle v_n-z_n, J^{E^*}z_{n-1}-J^{E^*}z_n \right\rangle \nonumber \\
& \leq& {\phi}_p(z_{n-1}, v_n )+\|J^{E^*}z_{n-1}-J^{E^*}z_n\|\|v_n-z_n\|.
\end{eqnarray}

Let $R > 0$ such that $\|u_1\| \leq R, \|y_n\| \leq R$ for all $n \in  \N$. Since $\left\{ {\theta}_n\right\}^{\infty}_{n=1}$ is a decreasing sequence, it is known that ${\theta}_{n-1}\geq {\theta}_n.$ Therefore,
\begin{eqnarray*}
\frac{{\theta}_{n-1}-{\theta}_n}{{\theta}_n}= \frac{{\theta}_{n-1}}{{\theta}_n}-1\geq 0.
\end{eqnarray*}
From Eq.(\ref{e24}), one can obtain that
$$J^Ey_{n-1}-J^Ey_n+\frac{1}{{\theta}_n}\left(Fy_{n-1}-z_{n-1}-(Fy_n-z_n)\right)=  \frac{{\theta}_{n-1}-{\theta}_n}{{\theta}_n}\left(J^Eu_1-J^Ey_{n-1}\right).$$
 By taking the duality pairing of each side of this equation with respect to $y_{n-1}-y_n$ and by the strong monotonicity of $A$, we have
$$\left\langle J^Ey_{n-1}-J^Ey_n, y_{n-1}-y_n\right\rangle+\left\langle z_n-z_{n-1}, y_{n-1}-y_n\right\rangle \leq  \frac{{\theta}_{n-1}-{\theta}_n}{{\theta}_n}\|J^Eu_1-J^Ey_{n-1}\|\| y_{n-1}-y_n\|,$$
which gives,
\begin{equation}\label{e27}
 \|J^Ey_{n-1}-J^Ey_n\| \leq \left( \frac{{\theta}_{n-1}}{{\theta}_n}-1\right)\|J^Ey_{n-1}-J^Eu_1\|.
\end{equation}
Similarly, for $R > 0$ such that $\|v_1\| \leq R, \|z_n\| \leq R$ for all $n \in  \N$, we obtain from Eq.(\ref{e241}) that
\begin{equation}\label{e271}
 \|J^{E^*}z_{n-1}-J^{E^*}z_n\| \leq \left( \frac{{\theta}_{n-1}}{{\theta}_n}-1\right)\|J^{E^*}z_{n-1}-J^{E^*}v_1\|.
\end{equation}
Using (\ref{e26}) and (\ref{e27}), the inequality (\ref{e29}) becomes
 \begin{equation}\label{e28}
{\phi}_p(y_n, u_{n+1}) \leq  (1-{\lambda}_n{\theta}_n){\phi}_p(y_{n-1}, u_n)+C_1\left( \frac{{\theta}_{n-1}}{{\theta}_n}-1\right)-p{\lambda}_n\left\langle u_n-y_n, z_n-v_n \right\rangle + p{\lambda}_nM_1Q_1,
\end{equation}
for some constant $C_1 > 0$ and using (\ref{e261}) and (\ref{e271}), the inequality (\ref{e291}) becomes
 \begin{equation}\label{e281}
{\phi}_p(z_n, v_{n+1}) \leq  (1-{\lambda}_n{\theta}_n){\phi}_p(z_{n-1}, v_n)+ C_2\left( \frac{{\theta}_{n-1}}{{\theta}_n}-1\right)-p{\lambda}_n\left\langle v_n-z_n, u_n-y_n \right\rangle + p{\lambda}_nM_2Q_2,
\end{equation}
for some constant $C_2 > 0$. Adding (\ref{e28}) and (\ref{e281}) gives
$$\wedge(w_n, x_{n+1}) \leq (1-{\lambda}_n{\theta}_n)\wedge(w_{n-1}, x_n)+C\left( \frac{{\theta}_{n-1}}{{\theta}_n}-1\right)+ p{\lambda}_nMQ,$$
 where $w_n:=\left(y_n, z_n\right)$ and $C:=C_1+C_2 > 0.$ By Lemma \ref{l11}, $\phi(w_{n-1}, x_n )\rightarrow 0$ as $n\rightarrow \infty$ and using Lemma \ref{l15}, we have that $x_n-w_{n-1}\rightarrow 0$ as $n\rightarrow \infty$ (See e.g, Chidume and Djitte \cite{r5}). Since $w_n\rightarrow x^* \in N(A),$ we obtain that $x_n\rightarrow x^*$ as $n\rightarrow \infty.$ But $x_n=(u_n, v_n)$ and $x^*=(u^*, v^*),$ this implies that $u_n\rightarrow u^*$ which is the solution of the Hammerstein equation.  
 \end{proof}
 
\begin{corollary}
 Let $H$ be a Hilbert space, $p>1$ and $\eta_1, \eta_2 \in(1,\infty).$ Suppose $F: H \rightarrow H$ is a continuous $(p, \eta_1)$-strongly monotone mapping such that the range of $(I+t_1F)$ is all of $H$ and $K: H \rightarrow H$ is a continuous $(p, \eta_2)$-strongly monotone mapping such that the range of $(I+t_2K)$ is all of $H$ for some $t_1,t_2>0.$ Let $\left\{u_n\right\}$ and $\left\{v_n\right\}$ be the sequences in $H$ defined iteratively for arbitrary points $u_1, v_1\in H$ by
\begin{equation}
u_{n+1} = u_n - {\lambda}_n(Fu_n-v_n)-{\lambda}_n{\theta}_n(u_n-u_1), n \in \N,
\end{equation}
\begin{equation}
v_{n+1} = v_n - {\lambda}_n(Kv_n+u_n)-{\lambda}_n{\theta}_n(v_n-v_1), n \in \N,
\end{equation}
where the real sequences $\left\{ {\lambda}_n\right\}^{\infty}_{n=1}\subset (0,1)$ and $\left\{ {\theta}_n\right\}^{\infty}_{n=1}$ in $(0,\frac{1}{2})$ are such that,
\begin{itemize}
	\item[(i)] $\displaystyle \lim_{n\rightarrow \infty}{\theta}_n =0;$ 
	\item[(ii)]$ \displaystyle\sum_{n=1}^{\infty} {\lambda}_n{\theta}_n=\infty$; ${\lambda}_n=o({\theta}_n)$;
	\item[(iii)]$\displaystyle \lim_{n\rightarrow \infty}\left(({\theta}_{n-1}/{\theta}_n)-1\right)/{{\lambda}_n{\theta}_n}=0.$
\end{itemize}		
 Suppose that $0 = u + KFu$ has a solution in $E.$ Then the sequence $\left\{u_n \right\}$ converges strongly to the solution of $u+KFu=0.$
\end{corollary}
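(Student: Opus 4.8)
The plan is to obtain this Corollary as a direct specialization of Theorem \ref{t5} to the Hilbert space setting, where the geometric structure collapses the duality mappings to the identity. First I would recall that every Hilbert space $H$ is simultaneously uniformly smooth and uniformly convex, so that taking $E = H$ (and hence, via the Riesz representation, $E^* = H$) immediately supplies the ambient geometric hypotheses required by Theorem \ref{t5}. The crucial observation is that in $H$ the normalized duality mapping is the identity; that is, with $p = 2$ (so that $q = 2$) the generalized duality mappings satisfy $J^E = I$ and $J^{E^*} = I$.

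Next I would check that the hypotheses match under this identification. Under $J^E = I$, the operator $J^E + t_1 F$ reduces to $I + t_1 F$, so the assumption that the range of $J^E + t_1 F$ is all of $E^*$ becomes exactly the stated condition that the range of $I + t_1 F$ is all of $H$; likewise the range condition on $J^{E^*} + t_2 K$ becomes the condition on $I + t_2 K$. The continuity and $(p,\eta_1)$-, $(p,\eta_2)$-strong monotonicity of $F$ and $K$ carry over verbatim, and the three conditions (i)--(iii) imposed on the real sequences $\{\lambda_n\}$ and $\{\theta_n\}$ are identical in the two statements.

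It then remains to verify that the iterative schemes coincide. Substituting $J^E = I$ and $J^{E^*} = I$ into (\ref{am1}) gives
$$u_{n+1} = u_n - \lambda_n\left(Fu_n - v_n + \theta_n(u_n - u_1)\right) = u_n - \lambda_n(Fu_n - v_n) - \lambda_n\theta_n(u_n - u_1),$$
and substituting into (\ref{am2}) gives
$$v_{n+1} = v_n - \lambda_n\left(Kv_n + u_n + \theta_n(v_n - v_1)\right) = v_n - \lambda_n(Kv_n + u_n) - \lambda_n\theta_n(v_n - v_1),$$
which are precisely the recursions of the Corollary. Hence all hypotheses of Theorem \ref{t5} are met, and its conclusion yields at once that $\{u_n\}$ converges strongly to the solution of $u + KFu = 0$.

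I expect the only point requiring care to be the identification of the duality mappings with the identity: this is exact for the normalized duality mapping, corresponding to $p = 2$, whereas for a general exponent $p > 1$ the generalized duality mapping on $H$ is $x \mapsto \|x\|^{p-2}x$, which is not the identity. The clean reading is therefore to take $p = 2$, in which case the reduction above is literal and the proof is an immediate application of Theorem \ref{t5}; for other values of $p$ one must interpret the identity appearing in the Corollary's recursion as standing in place of $J^E$ and $J^{E^*}$, after which the same invocation of Theorem \ref{t5} delivers the result.
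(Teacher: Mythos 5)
Your proposal is correct and takes essentially the same route as the paper, whose entire proof is the one-line remark that one may take $E := H$ and $X := H\times H$ and invoke Theorem \ref{t5}, since Hilbert spaces are uniformly smooth and uniformly convex. Your write-up is in fact more careful than the paper's: you rightly flag that the identification $J^E = J^{E^*} = I$ is literal only for $p = 2$ (for general $p>1$ the generalized duality map on $H$ is $x \mapsto \|x\|^{p-2}x$), a subtlety the corollary's statement and the paper's proof silently ignore.
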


\begin{proof}
Take $E:=H$ and $X := H\times H,$ the result follows from Theorem \ref{t5}. This is true since uniformly smooth and uniformly convex spaces are more general than the Hilbert spaces.
\end{proof}
\section{Application}
An illustration is given to show the application of our results in the physical system. The forced oscillations of finite amplitude of a pendulum is shown as a specific example of nonlinear integral equations of Hammerstein type. This is to convince our readers about the application of our results in solving real life problems, arising in physical phenomena.
\begin{example}
 Let us consider the forced oscillations of finite amplitude of a pendulum (see, e.g., Pascali and Sburlan \cite{b11}, Chapter IV, p. $164$). Consider an inhomogeneous differential equation given by
 \begin{equation} \label{1}
    \begin{cases}
       v''(t) + a^2 sin~ v(t) = z(t), & {\rm}\ t\in[0,1],\\
       v(0) = v(1) = 0.
    \end{cases}       
\end{equation}
The amplitude of oscillation $v(t)$ is a solution of the problem, where the driving force $z(t)$ is periodic and odd. The constant $a\neq 0$ depends on the length of the pendulum and on gravity.
\par We begin by computing the Green function for the $2nd$ order equation,
\begin{equation} \label{d6}
v''(t) =0, ~~v(0) = v(1) = 0.
\end{equation}
Let $\Lcal$ denotes the differential operator and $\Mcal$, the manifold which denotes the differential equation together with the associated boundary conditions. For $x\in[0,1]$, we denote by $x^{-}$, the values of $t\in[0,x)$  and by $x^{+}$, the values of $t\in(x,1]$. For $t\neq x$, the Green function is simply a homogeneous solution of the differential equation. However at $t =x$ we expect some singular behavior. The interested readers may read further for the algorithm for constructing the Green function, $G(t,x)$ for $nth$ order equations (see, e.g., Aibinu and Chidume \cite{b39}). For $n=2$, the algorithm for computing $G(t,x)$ is given as below:
\begin{itemize}
	\item [(i)] $\Lcal \left(G(.,x) \right)(t)=0$ for $0\leq t<x$ and for $x<t \leq 1$;
	\item [(ii)] $G(.,x)$ is in $\Mcal$;
	\item [(iii)]$G(.,x)$ is a continuous function;
	\item [(iv)] $\frac{\partial G(t,x)}{\partial t}/_{_{t=x^{+}}}-\frac{\partial G(t,x)}{\partial t}/_{_{t=x^{-}}}=\frac{1}{c_2(x)}$ (where $c_2(x)$ is the coefficient of the second order term).
\end{itemize}

A pair of solutions to the homogeneous equation $v''=0$ are $1$ and $t$. Therefore, the general solution is given by 
$$v(t)=a_1+a_2t,$$
 where $a_1$ and $a_2$ are constants.
From condition (i), we seek the Green function in the form

\begin{equation} \label{d10}
G(t,x) =  \left \{ \begin{array}{cl} A+Bt, & {\rm}\ 0 \leq t \leq x,\\
 C+Dt,& {\rm}\ x < t \leq 1, \end{array} \right.
\end{equation}
where A,B,C and D are functions of the parameter x. Condition (ii) requires that $G(.,x)$ be in $\Mcal$. Therefore, we evaluate $G(0,x)=0$ and $G(1,x)=0$, which give,
\begin{equation} \label{d7}
A=0 \hspace{.2cm} \mbox{and} \hspace{.2cm} C+D=0.
\end{equation}
By condition (iii), $G(.,x)$ is a continuous function.
$$ G(t,x)|_{t\rightarrow x^+}= G(t,x)|_{t\rightarrow x^-}\Rightarrow (C-A)+(D-B)x=0.$$
Since $A=0$ (from (\ref{d7})), we have,
\begin{equation} \label{d8}
C+(D-B)x=0.
\end{equation}
Condition (iv) requires that $G_t|_{t=x^+}-G_t|_{t=x^-}=1$ (since $c_2(x)=1$). Thus,
\begin{equation} \label{d9}
D-B=1
\end{equation}
Solving ((\ref{d7}), (\ref{d8}) and (\ref{d9})) for the three unknowns, gives $B=x-1,~C=-x$ and $D=x$.\\
 By substituting for the values of $A,B,C$ and $D$ in Eq.(\ref{d10}), we obtain
\begin{equation}
G(t,x) =  \left \{ \begin{array}{cl} t(x-1), & {\rm}\ 0 \leq t \leq x,\\
 x(t-1),& {\rm}\ x < t \leq 1. \end{array} \right.
\end{equation}
Equivalently, the Green function for the given boundary value problem is the triangular function given by
\begin{equation}
-G(t,x) =  \left \{ \begin{array}{cl}  t(1-x), & {\rm}\ 0 \leq t \leq x,\\
 x(1-t),& {\rm}\ x < t \leq 1. \end{array} \right.
\end{equation}
Eq.(\ref{1}) is equivalent to the nonlinear integral equation
\begin{equation}\label{2}
v(t) = - \int_{0}^{1}G(t, x)\left[z(x)-a^2 sin ~v(x) \right]dx.
\end{equation}
Take $ \int_{0}^{1}G(t, x)z(x)dx = g(t)$ and $v(t) + g(t) = u(t),$ then Eq.(\ref{2}) can be written as the integral equation 
\begin{equation}\label{3}
u(t) + \int_{0}^{1}G(t, x)f(x, u(x))dx = 0,
\end{equation}
where $f(x, u(x)) = a^2 sin \left[u(x)-g(x)\right]$.
Eq.(\ref{3}) is a homogeneous integral equation of Hammerstein type. 
\end{example}
\section{Numerical Illustration}
Numerical example is given to depict the convergence of the sequences $\left\{u_n\right\}$ and $\left\{v_n\right\}$ which are defined in the main theorem.
Let $E$ be a Hilbert space. Then the duality map becomes the identity map. Consequently, the sequences (\ref{am1}) and (\ref{am1}) reduce to
\begin{equation}\label{am3}
u_{n+1} = u_n - {\lambda}_n\left(Fu_n-v_n+{\theta}_n(u_n-u_1)\right), n \in \N,
\end{equation}
\begin{equation}\label{am4}
v_{n+1} = v_n - {\lambda}_n\left(Kv_n+u_n+{\theta}_n(v_n-v_1)\right), n \in \N,
\end{equation}
from arbitrary $u_1$ and $v_1$ in $ E.$
\par In particular, let $E={\R}^2$ with the usual norm, then $E^*={\R}^2.$ Let 
$F=\begin{pmatrix} 
7 & 9 \\
-9 & 25 
\end{pmatrix},$  
$K=\begin{pmatrix} 
3 & -2 \\
2 & 5 
\end{pmatrix}$
with 
$\check{u}=\begin{pmatrix} 
u_1  \\
u_2  
\end{pmatrix}$
and
 $\check{v}=\begin{pmatrix} 
v_1  \\
v_2  
\end{pmatrix}.$ Then $F\check{u}=\left(7u_1+9u_2, ~-9u_1+25u_2 \right)$ nad $K\check{v}=\left(3v_1-2v_2, ~2v_1+5v_2\right).$ Therefore
 $\langle F\check{u}, \check{u}\rangle \ge 7{\|\check{u}\|}^2$ and $\langle K\check{v}, \check{v}\rangle\ge 3{\|\check{v}\|}^2.$  Thus for $p=2,$ $F$ is a $(p, \eta_1)$-strongly monotone mapping and $K$ is a $(p, \eta_2)$-strongly monotone mapping with $\eta_1=7$ and $\eta_2=3,$ respectively. Take $\left\{ {\lambda}_n\right\}=\left\{ \frac{1}{n} \right\}$ and  $\left\{{\theta}_n\right\}=\left\{\frac{1}{n+1}\right\},$ numerical results are given for the solution of Hammerstein integral equation by Matlab 2015a while the tolerance $10^{-4}$ is being used.

\begin{table}
\begin{center}
\caption{Numerical values of $||u_{n+1}-u_n||$ for n$^{th}$ iterations}
\label{Hammt1}
\begin{tabular}{c c c c}
\hline
\multirow{3}{*}{Iteration} & \multicolumn{3}{c}{$||u_{n+1}-u_n||$} \\
\cline{2-4}
 &  $\left(u_1=(1,1), v_1=(1,1)\right)$ & $\left(u_1=(1,1/2),v_1=(1/4,1)\right)$ & $\left(u_1=(4,-5),v_1=(-7,3)\right)$   \\
\cline{2-4}
  (n) & e+04  & e+03 &  e+03   \\
\hline
  1& 0.0017 & 0.0103 &0.0610       \\ 
  2& 0.0130 & 0.0848 &0.3039\\ 
  3 &0.0610 & 0.4167 &1.0488  \\
  4& 0.1938 & 1.3594 &2.6901 \\
  5& 0.4470 & 3.1870 &5.2733 \\
  6& 0.7767 & 5.6002 &8.0423  \\
  7& 1.0380 & 7.5473 &9.6402 \\ 
  8& 1.0775 & 7.8854 &9.1093 \\
  9& 0.8695 & 6.3964 &6.7588  \\
  10& 0.5410 & 3.9983&3.8921  \\ 
  11&0.2549 & 1.8913 & 1.7018 \\ 
  12&0.0879 & 0.6546 &0.5438 \\ 
  13& 0.0208 & 0.1558 &0.1185\\ 
  14& 0.0030 & 0.0224 &0.0152  \\ 
  15&  0.0002 & 0.0013 & 0.0007\\
  16& 0.0000  & 0.0000 & 0.0000 \\ 
  17& 0.0000 & 0.0000 & 0.0000 \\ 
  18& 0.0000 & 0.0000 & 0.0000 \\ 
  19& 0.0000 & 0.0000 & 0.0000 \\ 
  20& 0.0000 & 0.0000 & 0.0000 \\  
  \hline
\end{tabular}
\end{center}
\end{table}
 
 \begin{figure}
\includegraphics[width=5.0cm ,height=5.0cm]{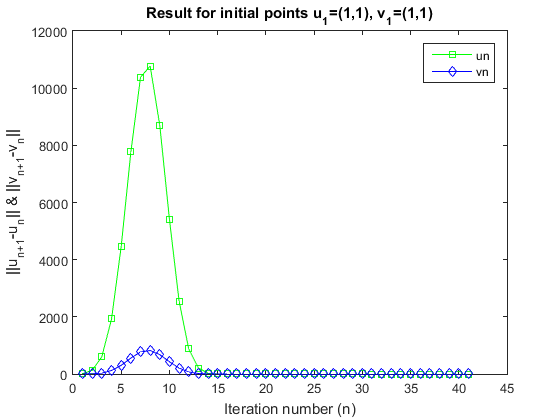}
\includegraphics[width=5.0cm ,height=5.0cm]{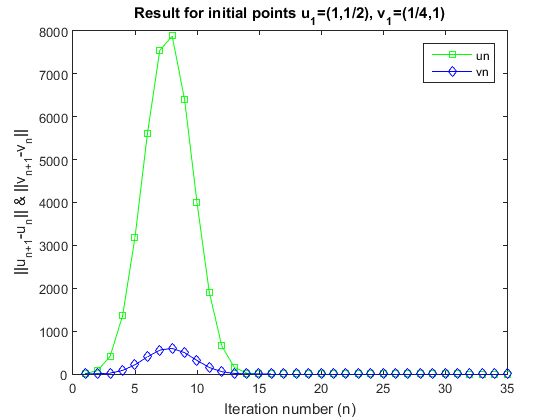}
\includegraphics[width=5.0cm ,height=5.0cm]{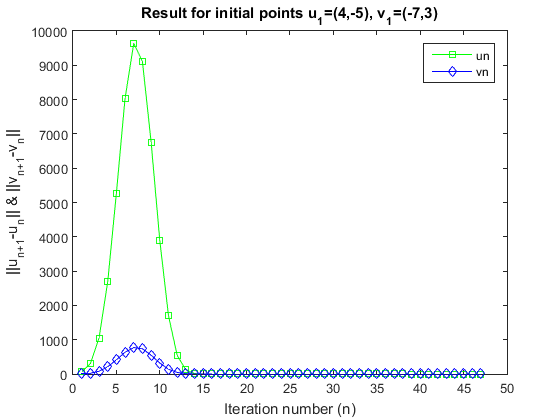}
 \caption{Numerical results}
 \label{Hammf1}
\end{figure}

\section*{Conclusion}
The iterative algorithm for the solution of nonlinear integral equations of Hammerstein type with monotone mappings has been considered. The strong convergence of the sequences of iteration to the solution of nonlinear integral equations of Hammerstein type is obtained without the assumption of existence of a real constant whose calculation is unclear. This shows the efficacy of the technique which has been used in this study in getting rid of the assumption of existence of a real constant whose calculation is unclear and how some results which were obtained in Hilbert space can be extended to a general Banach space (See e.g, Chidume and Djitte \cite{r5}). An illustration is given to convince our readers about the application of our results in solving some real life problems which is common in physical sciences. The forced oscillations of finite amplitude of a pendulum was shown as a specific example of nonlinear integral equations of Hammerstein type. The numerical example portrays the convergence of the sequences $\left\{u_n\right\}$ and $\left\{v_n\right\}.$

\footnotesize
\noindent {\bf Acknowledgements}: The first author acknowledges with thanks the postdoctoral fellowship and financial support from the DSI-NRF Center of Excellence in Mathematical and Statistical Sciences (CoE-MaSS). Opinions expressed and conclusions arrived are those of the authors and are not necessarily to be attributed to the CoE-MaSS.
 


\end{document}